\newtheorem{theorem}{Theorem} 
\newtheorem{lemma}{Lemma}[section]
\newtheorem{corollary}{Corollary}[section]
\newtheorem{rem}{Remark}[theorem]
\theoremstyle{definition}
\theoremstyle{plain}
\newtheorem{definition}{Definition}[section]
\renewcommand{\phi}{\varphi}
\newcommand{\eps}{\varepsilon}
\newcommand{\He}{\mathbb{H}}
\newcommand{\e}{{\rm e}}
\newcommand{\I}{{\rm i}}
\newcommand{\W}{{\rm W}}
\begin{document}

\title{Quantitative sub-ballisticity of self-avoiding walk on the hexagonal lattice}

\author{Dmitrii Krachun\footnotemark[1]\footnote{Princeton University, {dk9781@princeton.edu}}, \, Christoforos Panagiotis\footnotemark[2]\footnote{University of Bath, {cp2324@bath.ac.uk}}}

\maketitle

\begin{abstract}
    We prove quantitative sub-ballisticity for the self-avoiding walk on the hexagonal lattice. Namely, we show that with high probability a self-avoiding walk of length $n$ does not exit a ball of radius $O(n/\log{n})$. Previously, only a non-quantitative $o(n)$ bound was known from the work of Duminil-Copin and Hammond \cite{DCH13}. As an important ingredient of the proof we show that at criticality the partition function of bridges of height $T$ decays polynomially fast to $0$ as $T$ tends to infinity, which we believe to be of independent interest.
\end{abstract}

\section{Introduction}

Self-avoiding walk, originally introduced in 1953 by Flory \cite{Flory53} as a simple model for studying polymer molecules, quickly attracted the interest of both mathematicians and physicists. Initially, Flory's primary interest laid in understanding the typical characteristics of self-avoiding walk, and in particular the mean squared displacement of its endpoint. Since those early days, a substantial amount of research has been dedicated to exploring the typical geometric properties of this model. Despite its seemingly simple definition, the self-avoiding walk has proven to be challenging to study, with many fundamental questions about its behavior remaining unresolved. For a comprehensive introduction the reader can consult \cite{LecturesSAW,MaSaSAWBook}.

Denote by $c_n$ the number of self-avoiding (i.e.\ visiting every vertex at most once) walks of length $n$ on the hexagonal lattice $\mathbb{H}$
started from some fixed vertex,
e.g.\ the origin. A fundamental quantity in the study of self-avoiding walk is its \textit{connective constant} $$\mu=\lim_{n\to \infty} c_n^{1/n},$$
where the limit is known to exist by a standard subadditivity argument \cite{HM54}. In a breakthrough paper, Duminil-Copin and Smirnov \cite{DCS12} used the \textit{parafermionic observable} for self-avoiding walk to prove that $\mu=\sqrt{2+\sqrt{2}}$ on the hexagonal lattice. Except for simple cases\footnote{There are lattices whose connective constant can be deduced from the connected constant of $\mathbb{H}$, e.g.\ the so-called $3.12^2$ lattice \cite{JG98}.}, this is the only lattice for which the value of $\mu$ is known explicitly. 

Beyond the case of the hexagonal lattice, the model has been studied extensively on the hypercubic lattice $\mathbb{Z}^d$, where it is now well-understood in dimensions $d\geq 5$ by the seminal work of Hara and Slade \cite{HaSla1992,HaSlaCri}. The low-dimensional cases are more challenging, and the gap between what is known and what is conjectured to be true is very large. See \cite{HammersleyWelsh,KestenI,KestenII,DCH13,DCKY14,SAWEnds,H18, DCGHM20} for some of the most important results in low dimensions.

Denote by $\mathbb{P}_n$ the uniform measure on self-avoiding walks on the hexagonal lattice $\mathbb{H}$ of length $n$ starting from the origin. Our main result establishes the first quantitative bound on the displacement of self-avoiding walk.
\begin{theorem}\label{thm: quantitative sub-ballisticity}
There exists $c>0$ such that 
\[
\mathbb{P}_n\left[\max\{\lVert \gamma_k \rVert: 0\leq k\leq n\}\geq 10^{11}\cdot \frac{n}{\log(n+1)}\right]\leq \exp\left(-cn^{2/3}\right)
\]
for every $n\geq 1$.
\end{theorem}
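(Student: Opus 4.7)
The plan is to combine the polynomial decay of the bridge partition function at criticality (announced in the abstract as an independent result of the paper) with a Hammersley--Welsh-type bridge decomposition on the hexagonal lattice. Write $B_T:=B_T(\mu^{-1})$ for the critical partition function of bridges of height $T$ rooted at the origin; by hypothesis there exists $\alpha>0$ with $B_T\leq T^{-\alpha}$.

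The first step is a reduction from displacement to bridge structure. Since the hexagonal lattice has only finitely many axis directions, I can assume at the cost of a constant factor that any SAW with $\max_k \|\gamma_k\|\geq R$ has a subwalk reaching height at least $R/C$ in a chosen cardinal direction. The Hammersley--Welsh unfolding then expresses such a subwalk, at the multiplicative cost $\exp(O(\sqrt n))$, as a concatenation of bridges whose spans are strictly decreasing and sum to at least $R/C$. A naive use of this decomposition with a single bridge of large span yields only the probability bound $\exp(O(\sqrt n))\cdot R^{-\alpha}$, which is too weak for the target $R\asymp n/\log n$.

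The central idea is therefore to extract many bridges rather than one: if one partitions the walk at its first passages through equally spaced heights $jR/k$ for $j=1,\ldots,k$, one obtains $k$ disjoint sub-bridges each of span at least $R/k$. Applying the polynomial bound to each yields a saving factor of $(R/k)^{-k\alpha}$ on the number of SAWs with displacement $\geq R$, to be compared with the critical count $c_n\asymp\mu^n$. Balancing this saving against the Hammersley--Welsh bridgification cost on $k$ pieces of total length $n$ (which by Cauchy--Schwarz is $\exp(O(\sqrt{kn}))$), and choosing $R\asymp n/\log n$ together with $k$ a suitable power of $n$, should produce the tail $\exp(-cn^{2/3})$.

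The main obstacle is executing the multi-bridge extraction in a combinatorially clean manner: the $k$ pieces need to be simultaneously bridgifiable without overcounting, and the polynomial bound on $B_T$ must transfer to each piece despite their being parts of a single ambient SAW, subject to global self-avoidance. I expect this will be handled by a renormalisation-type argument, in which one shows that at an intermediate scale $\asymp n^{1/3}$ a single application of the polynomial decay gives a constant-order probability saving, and then iterates this saving across $\asymp n^{2/3}$ nearly-independent scales so that the exponents add up to $n^{2/3}$. The most delicate ingredient will be decoupling these scales using reflections and concatenation identities specific to the hexagonal lattice, so that Hammersley--Welsh can be applied locally without re-incurring a global $\exp(O(\sqrt n))$ cost at every step.
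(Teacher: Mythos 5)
Your overall architecture matches the paper's: project onto a lattice direction, apply one global Hammersley--Welsh unfolding at cost $e^{O(\sqrt n)}$, and then beat that cost by extracting many sub-bridges of height $\asymp n^{1/3}$ from a walk of span $\asymp n/\log n$, so that the polynomial decay $B_T\leq 100\,T^{-\eps}$ compounds into a factor $e^{-cn^{2/3}}$. However, the mechanism you propose for the multi-bridge extraction has a genuine, quantifiable gap. First, cutting at \emph{first} passages through equally spaced heights does not produce bridges (a piece may dip back below its starting line), which is presumably why you invoke a second round of bridgification on the $k$ pieces at cost $\exp(O(\sqrt{kn}))$. But this cost always defeats the saving: the saving in the exponent is at most $\alpha k\log(R/k)$, and for $R\leq n/\log n$ one has $\alpha k\log(R/k)\big/\sqrt{kn}=\tfrac{\alpha}{\sqrt{n}}\sqrt{k}\log(R/k)\leq \tfrac{2\alpha}{e}\sqrt{R/n}\leq \tfrac{2\alpha}{e\sqrt{\log n}}\to 0$ uniformly in $k$, so no choice of $k$ (in particular not $k\asymp n^{2/3}$, where $\sqrt{kn}=n^{5/6}\gg n^{2/3}\log n$) makes the argument close. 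The "renormalisation across nearly-independent scales" you gesture at in the last paragraph is exactly the part that needs a new idea, and you have not supplied it.

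The paper's resolution is twofold and worth internalizing. (i) After the single global unfolding the object is already a tall bridge, and one cuts it at the \emph{last} visit to each line $\ell_{i,j}$ (height $\tfrac{\sqrt3}{2}jm+i$, $m=\lfloor n^{1/3}\rfloor$) and the \emph{first} subsequent visit to $\ell_{i,j+1}$: these pieces are automatically bridges of height $\tfrac{\sqrt3}{2}m$, with no further unfolding and hence no $\exp(O(\sqrt{kn}))$ penalty. (ii) The leftover excursions that start and end on a line $\ell_{i,j}$ are concatenations of half-plane walks, and their total contribution is bounded by $(1+2\sum_{k\geq1}G_k)^{(n+1)/m}$ after a pigeonhole choice of the offset $i$ guaranteeing at most $(n+1)/m$ touchings of the cutting lines. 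The finiteness of $\sum_k G_k$ (Lemma~\ref{lem: G sum}, via the parafermionic observable) is the hexagonal-lattice-specific input that replaces your per-piece bridgification, and the prefactor $10^{11}\asymp\eps^{-1}$ in the displacement threshold is chosen precisely so that the saving $\tfrac{5}{3}n^{2/3}$ beats the excursion cost $\log4\cdot n^{2/3}$. Without an ingredient of this kind your scheme cannot produce the stated bound.
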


Let us mention that the result of Theorem \ref{thm: quantitative sub-ballisticity} is not sharp. 
The prediction, first made by Flory \cite{Flory53}, asserts that the typical displacement of a self-avoiding walk should scale as $n^{3/4+o(1)}$. This was later deduced under the assumption of conformal invariance of the scaling limit of self-avoiding walk by Lawler, Schramm and Werner in \cite{LSW02}.

An important tool in the proof of Theorem~\ref{thm: quantitative sub-ballisticity} is a family of walks that are called \textit{bridges} -- see Definition~\ref{def: bridges}. These walks were introduced by Hammersley and Welsh \cite{HammersleyWelsh}, where they showed that any self-avoiding walk admits a decomposition into bridges, and used this decomposition to show that 
\[
c_n\leq e^{C\sqrt{n}} \mu^n
\]
for every $n \geq 1$, where $C>0$ is an explicit constant. More recently, Duminil-Copin and Hammond \cite{DCH13} showed that self-avoiding walk on $\mathbb{Z}^d$ is sub-ballistic in any dimension $d\geq 2$, using the natural renewal structure of bridges that allows to construct a probability measure on infinite bridges. However, their result relies on the ergodicity of the latter measure, and for this reason it does not provide any quantitative bound on the displacement of self-avoiding walk. A major obstacle in that direction seems to be the lack of a quantitative bound on the partition function of bridges of a given height at the critical point $x_c=1/\mu$ that goes to $0$ as the height goes to infinity. 

In the next theorem, we obtain a polynomial bound for the partition function of bridges of height $T$ at criticality in the case of the hexagonal lattice (see Definition~\ref{def: bridges}), which we then use to prove Theorem~\ref{thm: quantitative sub-ballisticity}. We remark that the proof of the latter implication is based on techniques different than those of \cite{DCH13}, and it relies on the summability of the half-plane boundary two-point function -- see Lemma~\ref{lem: G sum}. The latter is only known in the case of the hexagonal lattice among low dimensional lattices and its proof uses the parafermionic observable.

\begin{theorem}\label{thm: polynomial decay}
Let $\varepsilon=10^{-10}$. For every $T\geq 1$ we have
\[
B_T\leq 100 \cdot T^{-\varepsilon}.
\]
\end{theorem}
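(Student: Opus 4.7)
The plan is to combine the Duminil--Copin--Smirnov parafermionic observable with the renewal structure of bridges, together with a dyadic bootstrap. The starting point is the identity obtained by summing the discrete Cauchy--Riemann relation of the parafermionic observable on a strip $S_T$ of height $T$ at the critical fugacity $x_c$, then letting the width of the strip tend to infinity:
\[
\cos(3\pi/8)\, A_T + \tilde B_T = 1,
\]
where $A_T$ (resp.\ $\tilde B_T$) denotes the partition function of self-avoiding walks in $S_T$ starting at the origin and ending on the bottom (resp.\ top) boundary. Since $\tilde B_T \geq B_T$, this gives the a priori bound $B_T \leq 1$. The qualitative statement $B_T \to 0$ would then follow from monotonicity of $A_T$ in $T$, combined with a ruling-out of the scenario that a positive fraction of walks escape to infinity --- either via an appeal to \cite{DCH13} or via a direct renewal argument.

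To upgrade qualitative decay to a polynomial rate, I would use the Hammersley--Welsh decomposition: each bridge of height $T$ factors uniquely at renewal heights into a concatenation of irreducible sub-bridges, giving
\[
\hat B(z) = \frac{\hat b^{\mathrm{irr}}(z)}{1 - \hat b^{\mathrm{irr}}(z)}, \qquad \hat B(z) := \sum_{T \geq 1} B_T z^T.
\]
Because $B_T \to 0$, one must have $\hat b^{\mathrm{irr}}(1) = 1$, so the tail of $B_T$ is controlled by that of $b^{\mathrm{irr}}_T$. Quantitative control is then obtained from a dyadic bootstrap producing a multiplicative improvement of the form
\[
B_{2T} \leq (1-\delta)\, B_T
\]
for some explicit (small) $\delta > 0$ and all $T$ above a threshold $T_0$. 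Iterating yields polynomial decay with exponent $\log(1/(1-\delta))/\log 2$, and the specific value $\varepsilon = 10^{-10}$ in the statement absorbs the smallness of $\delta$, with the constant $100$ accounting for small-scale behaviour.

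The main obstacle is establishing the multiplicative improvement $B_{2T} \leq (1-\delta) B_T$. The natural concatenation inequality $B_{T_1 + T_2} \geq B_{T_1} B_{T_2}$, obtained from gluing bridges, pushes in the opposite direction, so any gain $\delta$ must be extracted from the parafermionic identity by combining information at several scales simultaneously --- for instance by playing the identity on $S_T$ against the identity on $S_{2T}$ and using the renewal decomposition to track the contribution of bridges that spend nontrivial weight near the intermediate height $T$. The tiny value of $\varepsilon$ in the theorem reflects the modest size of the gain $\delta$ that can be obtained by this approach; improving it towards the conjectured sharp exponent would require genuinely new ideas.
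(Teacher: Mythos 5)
There is a genuine gap: the entire quantitative content of the theorem is concentrated in the step you label as ``the main obstacle'' --- the multiplicative improvement $B_{2T}\leq(1-\delta)B_T$ --- and your proposal does not supply a mechanism for proving it. The parafermionic identity $\cos(3\pi/8)A_T+B_T=1$ only gives $B_T\leq 1$ and monotonicity; the concatenation inequality, as you note, pushes the wrong way; and the Hammersley--Welsh renewal identity does not help, because the tail of the irreducible-bridge generating function is exactly as hard to control quantitatively as $B_T$ itself (knowing $B_T\to 0$ only tells you that either $\hat b^{\mathrm{irr}}(1)<1$ or $\hat b^{\mathrm{irr}}(1)=1$ with infinite mean, and in neither case do you get a rate without new input). ``Playing the identity on $S_T$ against the identity on $S_{2T}$'' is a restatement of the problem, not a proof: the identities at the two scales differ by $A_{2T}-A_T\geq 0$, and there is no a priori lower bound on this difference.

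The paper's actual route is quite different and worth contrasting with your plan. It does not work with $B_T$ and irreducible bridges directly, but with two auxiliary partition functions: triangle walks $D_T$ and half-plane boundary-to-boundary walks $G_k$, related to $B_T$ via the observable ($B_T\leq\cos(\pi/8)D_T$ and $\sum_{k\geq T}G_k\lesssim D_{2T-1}$). The gain is extracted geometrically, not from the strip identity at two scales: one adapts the Glazman--Manolescu construction gluing triangle walks into a $U$-walk, and controls the multiplicity of the gluing map by bounding the expected number of renewal times of a triangle walk (itself bounded via bridge decay). After a case analysis on whether the glued walks dip below the real axis, this yields the recurrence
\[
T^4D_{18T}^5\leq 2^{17}\Bigl(\sum_{i=0}^{3T}D_i\Bigr)^4\sum_{k=T}^{21T}G_k,
\]
and since $\sum_{k\geq T}G_k\lesssim D_{2T-1}$ is summable-in-spirit, a pigeonhole over $10^9$ geometric scales finds one scale where $\sum_{k=T}^{21T}G_k$ is small, giving a constant-factor drop of $D_T$ along a very sparse geometric subsequence --- weaker than the uniform $B_{2T}\leq(1-\delta)B_T$ you aim for, but sufficient for a polynomial rate. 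To make your proposal work you would need to supply an analogue of this construction; as written, the argument stops where the theorem begins.
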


The above result improves on the recent work of Glazman and Manolescu \cite{GM20}, where it was shown that $B_T$ converges to $0$ as $T$ tends to infinity, and furthermore that $B_T < (\log T)^{-1/3}$ along a subsequence. The lower bound $B_T\geq c/T$ was obtained in \cite{DCS12} using the parafermionic observable. It is expected that $B_T\sim T^{-1/4}$ by the results of Lawler, Schramm and Werner \cite{LSW02} on the conjectural scaling limit of self-avoiding walk in dimension $2$. 

The proof of Theorem~\ref{thm: polynomial decay} uses the parafermionic observable, which satisfies a local relation reminiscent of the Cauchy-Riemann equations. This property can be used to obtain non-trivial relations between the partition functions of certain types of walks. For instance, it enables to show that $B_T$ is decreasing as a function of $T$, as well as to bound $B_T$ by $D_T$, the partition function of triangle walks, i.e.\ walks that live in an equilateral triangle and end on either its right or its left side -- see Definition~\ref{def: triangle} and Lemma~\ref{lem: B-D ineq}. As it was observed by Glazman and Manolescu \cite{GM20} (see also \cite{DCGHM20}), the latter partition function is more convenient for certain geometric constructions that involve redirecting the endpoint of a walk, and they used it to concatenate three triangle walks to obtain a $U$-walk, i.e.\ a walk in the upper half-plane connecting two boundary points -- see Definition~\ref{def: U walk} and Figure~\ref{fig: 3 triangles}. 

Our strategy relies on the following two observations. First, one can use the observable to obtain a reverse inequality, i.e.\ bound the partition function of $U$-walks ending at distance $k\geq T$ from the origin in terms of $D_T$. This allows to interpret the result of \cite{GM20} as a recurrence inequality for $D_T$. Second, in the construction of \cite{GM20}, the $U$-walk obtained has the property that it crosses two sides of the original triangles only once. However, it is unlikely for a typical $U$-walk to satisfy this property, and one might expect to control the probability of this event using the natural renewal structure of bridges. Adapting the aforementioned construction we show that this is indeed the case in an averaged sense which allows us to obtain a more refined recurrence inequality -- see Corollary~\ref{cor: final ineq}. Our construction is more delicate though, as we end up with walks which might cross the real axis, and one needs to take into account the probability of this event. With this inequality in hand, we show that the value of $D_T$ drops by a constant factor along a geometric subsequence, from which the desired polynomial decay follows.

%\dimaText{At the end of the introduction mention that we believe that the $B_T$ bound should lead to better bounds, and then conjecture is that the displacement is at most $n^{1-\eps}$. The correct behaviour is ...}

\subsubsection*{Acknowledgements} We warmly thank Hugo Duminil-Copin for inspiring discussions. The project was initiated while the authors were at the University of Geneva. This research is supported by the Swiss National Science Foundation and the NCCR SwissMAP.

\section{Preliminary observations and notations}

Throughout this paper, we fix for convenience an embedding of the hexagonal lattice $\mathbb{H}$, where each edge is a line segment of length $\sqrt{3}/3$, and $0$ is the midpoint of a vertical edge, as shown in Figure~\ref{fig:domain}.

\subsection{Parafermionic observable}

Our analysis makes use of the parafermionic observable which was also used by Dumunil-Copin and Smirnov \cite{DCS12} to compute the connective constant of the hexagonal lattice. This observable is naturally defined in terms of self-avoiding walks between \emph{mid-edges} of $\mathbb{H}$, i.e.\ midpoints of edges of $\He$. We write $H$ for the set of mid-edges, and $E$ for the set of edges $\{a,b\}$ with $a,b\in H$ such that the corresponding edges $e_a$ and $e_b$ of $\mathbb{H}$ that contain $a$ and $b$, respectively, share a common vertex of $\mathbb{H}$. 

A \emph{walk} $\gamma$ on $H$ is a sequence $(\gamma_0,\gamma_1,\ldots,\gamma_n)$ such that for every $i=0,1,\ldots,n-1$, we have $\{\gamma_i,\gamma_{i+1}\}\in E$. A \emph{self-avoiding walk} (SAW) $\gamma$ is a walk such that $\gamma_i\neq \gamma_j$ for every $i\neq j$, i.e.\ every vertex is visited at most once. The \emph{length} $\ell(\gamma)$ of the walk is the number of vertices of $H$ visited by $\gamma$. We will write $\gamma:a\rightarrow S$ if a walk $\gamma$ starts at $a$ and ends at some mid-edge of $S\subset H$. In the case $S=\{b\}$, we simply write $\gamma:a\rightarrow b$. 

To define the observable, we need to restrict ourselves to walks living in domains. Given a set of vertices $V(\Omega)$ in $\mathbb{H}$, we let $E(\Omega)$ be the set of edges of $\mathbb{H}$ with at least one endpoint in $V(\Omega)$, and we let $\Omega\subset H$ be the set of mid-edges that lie in an edge of $E(\Omega)$. We call $\Omega$ a \emph{domain}. We also let $\partial \Omega$ be the set of mid-edges in $\Omega$ such that the corresponding edge in $E(\Omega)$ has one endpoint in the complement of $V(\Omega)$. See Figure~\ref{fig:domain}. In what follows, we always assume that $\Omega$ is simply connected, i.e.\ both $\Omega$ and $H\setminus \Omega$ span a connected graph. 

\begin{figure}
    \centering
    \includegraphics[width=.4\linewidth]{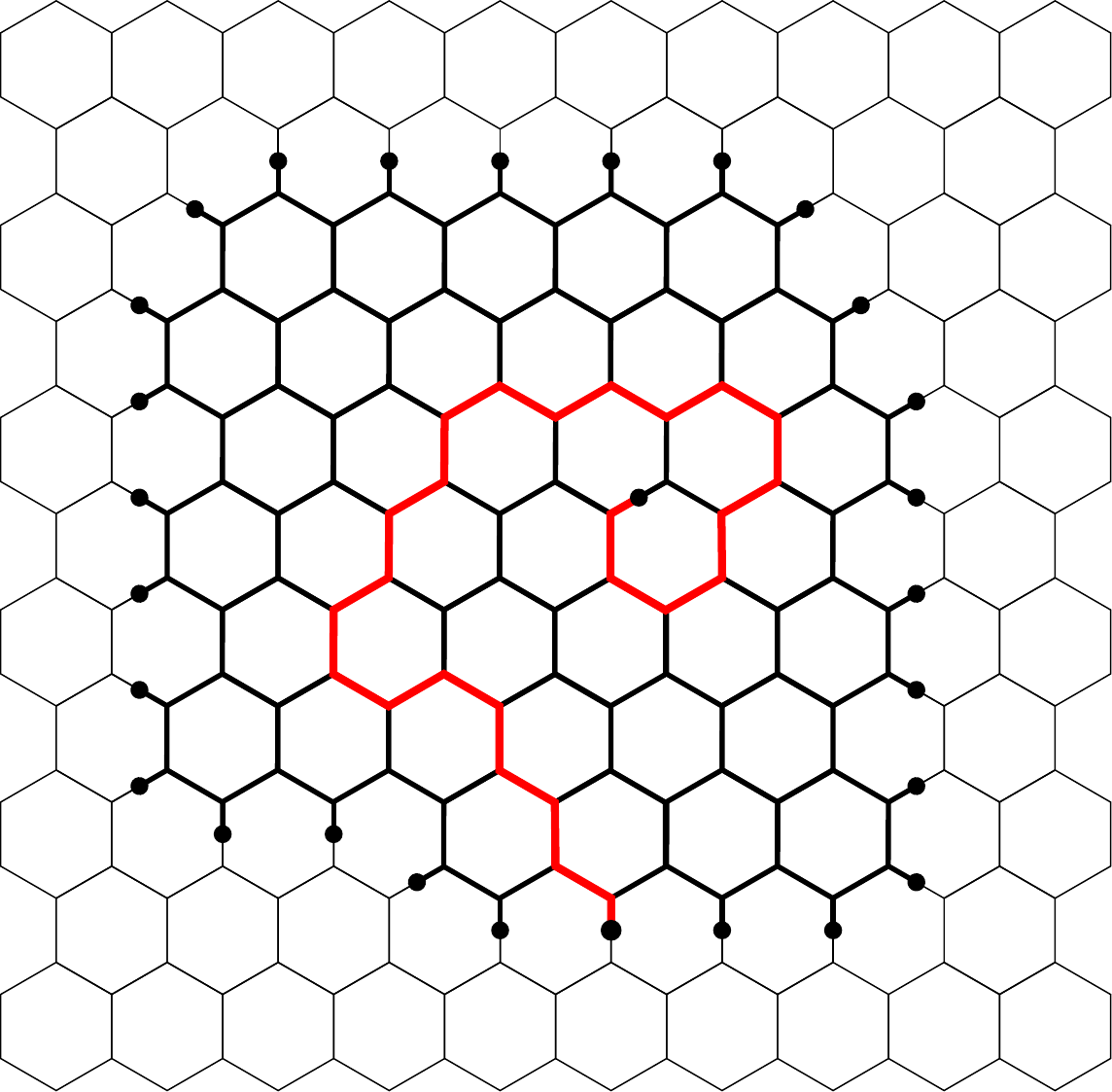}
    \put(-92,22){$0$}
    \put(-137,70){$\gamma$}
    \put(-85,100){$z$}
    \put(-76,54){$\Omega$}
    \caption{A domain $\Omega$ together with a self-avoiding walk $\gamma$ starting at $0$ and ending at $z$ shown in bold. The boundary $\partial \Omega$ of $\Omega$ and the start and end of $\gamma$ are depicted in dots.}
    \label{fig:domain}
\end{figure}

\begin{rem}
Consider the triangular lattice $\mathbb{T}$ embedded in the plane in such a way that vertices of $\mathbb{T}$ are the centers of the faces of $\mathbb{H}$ and the edges of $\mathbb{T}$ are line segments which meet the edges of $\mathbb{H}$ at the elements of $H$. Given a bounded and simply connected domain $\Omega\subset H$, the edges of $\mathbb{T}$ that intersect $\partial \Omega$ define a contour $\Pi$. 
\end{rem}

For a self-avoiding walk $\gamma$ starting at a mid-edge $a$ and ending at a mid-edge $b$, we define its
\emph{winding $\W_\gamma(a,b)$} as the total rotation of the direction in radians when $\gamma$ is traversed from $a$ to $b$. 

The parafermionic observable is defined as follows.

\begin{definition}
The \emph{parafermionic observable} for $a\in \partial \Omega$, $z\in \Omega$, 
is defined by
\begin{equation*}
	F(z)=F(a,z,x,\sigma)=\sum_{\gamma\subset \Omega:\ a\rightarrow z}\e^{-\I\sigma \W_\gamma(a,z)} x^{\ell(\gamma)}.
\end{equation*}
\end{definition}

\begin{rem}
The reason why we restrict to simply connected domains and why we choose $a\in \partial \Omega$ is because in this case for every $z\in \partial \Omega$, $F(z)$ is, up to a multiplicative constant, equal to the partition function of walks ending at $z$.
\end{rem}

To simplify formulas, below we set  $x_c:=1/\sqrt{2+\sqrt 2}$. The following result was obtained in \cite{DCS12}.

\begin{lemma}[\cite{DCS12}]\label{lem:relation}
If $x=x_c$ and $\sigma=\frac 58$, then $F$ satisfies the following relation for every vertex $v\in V(\Omega)$:
\[
		(p-v)F(p)+(q-v)F(q)+(r-v)F(r)=0,
\]
where $p,q,r$ are the mid-edges of the three edges adjacent to $v$.
\end{lemma}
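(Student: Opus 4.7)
The plan is to establish the identity by a local cancellation argument at the vertex $v$, following the original argument of Duminil-Copin and Smirnov \cite{DCS12}. Letting $z$ denote the endpoint of a generic walk, the first step is to rewrite the left-hand side as the single sum
\[
\sum_{z \in \{p,q,r\}} \sum_{\gamma \subset \Omega\,:\, a\rightarrow z} (z-v)\, \e^{-\I \sigma \W_\gamma(a,z)}\, x^{\ell(\gamma)},
\]
and then to partition the self-avoiding walks appearing in this sum into equivalence classes whose contributions vanish individually.

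The grouping is defined by how each walk interacts with $v$. Since every vertex of $\mathbb{H}$ has degree $3$, self-avoidance on mid-edges forces $v$ to be visited at most once by any $\gamma$; in particular each walk falls into one of two regimes. In regime (i), $\gamma$ ends at one of $p, q, r$ without ever touching $v$, i.e.\ it enters its final mid-edge from the opposite vertex of the corresponding edge. In regime (ii), $\gamma$ visits $v$ exactly once, either at its last step or earlier. In regime (i), I would pair $\gamma$ with the two walks obtained by appending a single step through $v$ into each of the other two mid-edges. In regime (ii), I would pair walks that share the same structure outside $v$ but traverse $v$ using a different pair of the three mid-edges. In both regimes the resulting triple contributes an expression of the shape
\[
(p-v)\, \e^{-\I \sigma \W_p}\, x^{\ell_p} + (q-v)\, \e^{-\I \sigma \W_q}\, x^{\ell_q} + (r-v)\, \e^{-\I \sigma \W_r}\, x^{\ell_r},
\]
where the windings $\W_p, \W_q, \W_r$ differ by multiples of $\pi/3$ (reflecting the $60^\circ$ turns of the hexagonal lattice) and the lengths $\ell_p, \ell_q, \ell_r$ differ by one or two units.

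The crux is the following algebraic observation: since $p-v, q-v, r-v$ are three vectors of equal modulus at mutual angles $120^\circ$, they are the three cube roots of unity up to a common rotation. Each triple sum therefore reduces to a linear combination of $1, \omega, \omega^2$ with $\omega = \e^{2\pi \I/3}$ and coefficients of the form $\e^{-\I \sigma k \pi / 3} x^m$ for small integers $k, m$. Imposing that the combinations coming from regime-(i) and regime-(ii) triples both vanish yields two equations in the unknowns $\sigma$ and $x$, whose unique physically relevant solution is $\sigma = 5/8$ and $x = x_c = 1/\sqrt{2+\sqrt 2}$. This is precisely how the critical parafermionic parameters are pinned down in \cite{DCS12}.

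The main obstacle is the case analysis ensuring that this pairing is well defined and exhaustive. One must separately handle the degenerate configurations in which a prospective sibling walk would revisit a previously used mid-edge and hence fail to be self-avoiding; here the task is to verify that such ``missing partners'' occur in compatible pairs, so that their would-be contributions still cancel among themselves. Carrying out this combinatorial bookkeeping over all subcases forms the technical heart of the argument in \cite{DCS12}.
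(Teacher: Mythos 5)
The paper does not actually prove this lemma: it is imported verbatim from \cite{DCS12}, so there is no internal proof to compare against, and what you have written is a sketch of the original Duminil-Copin--Smirnov argument. The skeleton is correct: one shows that the total contribution of all walks ending at $p$, $q$, or $r$ vanishes by partitioning them into small groups, the vectors $p-v,q-v,r-v$ are the cube roots of unity up to a common rotation, and the two resulting cancellation conditions (one per type of group) are precisely what single out $\sigma=5/8$ and $x=x_c$.

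Two points in your outline would not survive being made precise. First, the right dichotomy is not ``touches $v$ or not'' but ``visits exactly one of $p,q,r$ (its endpoint) versus all three'': a walk on $\mathbb{H}$ that visits one of these mid-edges in its interior must traverse the corresponding full edge and hence pass through $v$, so a walk ending at one of $p,q,r$ can visit exactly two of them only if its last step goes through $v$ --- i.e.\ only if it is the one-step extension of a shorter walk, which is exactly how the triplets are formed. Under this classification there are no ``missing partners'': in a triplet the base walk avoids $v$ and both other mid-edges, so both one-step extensions are automatically self-avoiding, and in a pair the involution (rewiring the unique passage through $v$ and reversing the intermediate arc) is always well defined and length-preserving. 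Your closing paragraph, which proposes to cancel the would-be contributions of non-self-avoiding siblings against one another, is repairing a defect that the correct classification does not have, and you give no mechanism for that cancellation. Second, the lengths and windings: in a triplet the lengths are $\ell,\ell+1,\ell+1$ and the two extensions turn by $\mp\pi/3$, producing the relation $1+2x\cos\bigl(\tfrac{\pi(2+\sigma)}{3}\bigr)=0$, which is the equation for $x$ once $\sigma$ is known; in a pair the two walks have \emph{equal} length --- not lengths ``differing by one or two units'' --- and this is essential, since the pair relation must hold identically in $x$ and is what determines $\sigma$ alone. With these corrections your outline coincides with the proof in \cite{DCS12}.
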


\begin{rem}\label{rem: contour integral}
Given a domain $\Omega$, we can define a piecewise constant function, which by abuse of notation we also denote $F$, along the edges of $\mathbb{T}$ (here we view each edge as a copy of an open interval) as follows. For each edge $e$ of $\mathbb{T}$ that meets a mid-edge $p \in \Omega$, we let $G$ be equal to $F(p)$ along $e$.  Note that the dual graph of the three edges of $\mathbb{H}$ incident to $v\in V(\Omega)$ is a triangle, hence we can view the sum $(p-v)F(p)+(q-v)F(q)+(r-v)F(r)$ as the contour integral of $F$ along this triangle. In the case where $\Omega$ is enclosed by a contour $\Pi$, summing over the vertices of $V(\Omega)$ we have cancellations along the edges of $\mathbb{T}$ in the region bounded by $\Pi$, and we obtain that the corresponding contour integral of $F$ along $\Pi$ is equal to $0$.
\end{rem}

\subsection{Domains and partition functions of interest}

There are several types of domains important for our purposes, which we now introduce, together with the relevant partition functions.

The first case of interest is strips which give rise to bridges.
\begin{definition}\label{def: bridges}
Let $k\geq 0$ be an integer. We define $\textup{Strip}_k$ to be the strip of height $\frac{\sqrt{3}}{2}k$ defined by the lines $y=0$ and $y=\frac{\sqrt{3}}{2}k$. We define $B(\textup{Strip}_k)$ to be the set of \textit{bridges} of height $\frac{\sqrt{3}}{2}k$ which start at $0$, i.e.\ SAWs $\gamma:0\rightarrow \{y=\frac{\sqrt{3}}{2}k\}\cap H$ which are contained in $\textup{Strip}_k$.
We denote by $B_k$ the corresponding partition function at the critical point $x_c$, i.e.\
\[
B_k:=\sum_{\gamma\in B(\textup{Strip}_k)}x_c^{\ell(\gamma)}.
\] 
Occasionally, we will also refer to rotations and translations of walks in $B(\textup{Strip}_k)$ as bridges.
\end{definition}

The second case of interest is equilateral triangles which give rise to triangle walks. 

\begin{definition}\label{def: triangle}
Let $k\geq 0$ be an integer. We define $\textup{Tria}_{2k+1}$ to be the equilateral triangle of side-length $2k+1$ with vertices at the points $(-k-1/2,0)$, $(k+1/2,0)$ and $(0,\frac{\sqrt{3}}{2}(2k+1))$. We denote $\textup{R}_{2k+1}$ and $\textup{L}_{2k+1}$ the intersection of $H$  with the right and left side of $\textup{Tria}_{2k+1}$, respectively. We define $D(\textup{Tria}_{2k+1})$ to be the set of \textit{triangle walks} of width $\frac{\sqrt{3}}{2}(2k+1)$, i.e.\ SAWs $\gamma:0\rightarrow \textup{R}_{2k+1}\cup\textup{L}_{2k+1}$ which are contained in $\textup{Tria}_{2k+1}$. We let 
\[
D_{2k+1}:=\sum_{\gamma\in D(\textup{Tria}_{2k+1})}x_c^{\ell(\gamma)}
\] 
be the corresponding partition function. For convenience, we extend the definition to even integers by letting $D_{2k+2}:=D_{2k+1}$ and $D_0:=B_0/\cos\left(\frac{\pi}{8}\right)=1/\cos\left(\frac{\pi}{8}\right)$.
\end{definition}

Finally, we consider the upper half-plane.

\begin{definition}\label{def: U walk}
Let $\textup{U}:=\mathbb{R}\times [0,\infty)$ and denote by $G(\textup{U})$ the set of \textit{$U$-walks}, i.e.\ SAWs $\gamma:0\rightarrow (\mathbb{R}\times \{0\})\cap H$ which are contained in the upper half-plane $\textup{U}$. We define 
\[
G_k:=\sum_{\substack{\gamma \in G(\textup{U}): \\ \gamma:0\rightarrow (k,0)}} x_c^{\ell(\gamma)}
\]
to be the partition function of $U$-walks which end at $(k,0)$.
\end{definition}

% We write $B_T$ for the partition function of bridges on a strip of width $T$. 

% We write $\textup{Tria}_t$ for the equilateral triangle with vertices at (roughly) $(-T, 0), (T, 0)$ and $(0, \sqrt{3}T)$, so it has side of length $2T$. We write $D_T$ for the partition function of walks which start at $(0, 0)$ stay inside $\textup{Tria}_T$ and end at either the left of the right side of $\textup{Tria}$.

% We write $G_k$ for the partition function of walks from $(0, 0)$ to $(k, 0)$ which stay in the upper-half plane. 

Using the parafermionic observable we  obtain relations between the values of the three types of partition functions defined above. Some of these results were also obtained in \cite{GM20} and our proofs are based on similar arguments.

\begin{lemma}\label{lem: G sum}
We have that
\[
\sum_{k=1}^\infty G_k \leq \frac{1}{2\cos\left(\frac{3\pi}{8}\right)}.
\]
Furthermore, for every $T\geq 1$ we have
\[
\sum_{k=T}^\infty G_k \leq \frac{\cos\left(\frac{\pi}{8}\right)}{2\cos\left(\frac{3\pi}{8}\right)}D_{2T-1}. 
\]
\end{lemma}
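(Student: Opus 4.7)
The argument applies the contour-integral formulation of Remark~\ref{rem: contour integral} in two different domains and projects the resulting identity onto the horizontal direction. The common ingredient is the following winding computation: for any $U$-walk $\gamma:0\to(k,0)$ contained in $\textup{U}$, the path together with the real-axis segment from $(k,0)$ back to $0$ is a simple positively oriented Jordan curve in the closed upper half-plane, so the tangent-turning theorem forces $\W_\gamma(0,(k,0))=-\pi\,\mathrm{sign}(k)$. Therefore $F((k,0))=\e^{\I(5\pi/8)\mathrm{sign}(k)}\,G_{|k|}$ for $k\neq 0$ and $F(0)=1$; in particular $F((k,0))+F((-k,0))=-2\cos(3\pi/8)\,G_k$ for $k>0$.

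\textbf{First inequality.} Apply Remark~\ref{rem: contour integral} to a truncated half-plane domain $\Omega$ (say a trapezoid whose slanted sides are aligned with lattice directions so that the dual contour $\Pi$ is piecewise straight). Projecting $\oint_\Pi F=0$ onto the horizontal direction, the bottom contributes (up to a common positive constant from the dual-edge length) the quantity
\[
1-2\cos(3\pi/8)\sum_{k\geq 1}G_k^{\Omega},
\]
where $G_k^{\Omega}$ is $G_k$ restricted to walks contained in $\Omega$. A standard winding analysis on the top and slanted sides (as in \cite{DCS12}) shows that their horizontal projections are nonnegative real multiples of partition functions of walks ending on those sides. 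Rearranging and sending the truncation to infinity, monotone convergence yields $\sum_{k\geq 1}G_k\leq\tfrac{1}{2\cos(3\pi/8)}$.

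\textbf{Second inequality.} Apply the same machinery to the bounded domain $\textup{Tria}_{2T-1}$, where $\oint_\Pi F=0$ is already an exact identity. The two slanted sides are aligned with lattice directions, and by the same Jordan-curve argument as in the common step, a walk from $0$ ending on the right (resp.\ left) side has winding exactly $-\pi/3$ (resp.\ $+\pi/3$), producing a phase $\e^{\pm\I 5\pi/24}$ in $F$. Combining these phases with the dual-edge directions $\e^{\pm\I 2\pi/3}$ along the slanted sides and using the left-right symmetry $D_{2T-1}^R=D_{2T-1}^L=D_{2T-1}/2$, the slanted contributions to the horizontal projection combine into $-\cos(\pi/8)\,D_{2T-1}$. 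This yields the exact identity
\[
1=2\cos(3\pi/8)\sum_{k=1}^{T-1}G_k^{\textup{Tria}_{2T-1}}+\cos(\pi/8)\,D_{2T-1}.
\]
Since $G_k^{\textup{Tria}_{2T-1}}\leq G_k$, combining with the first inequality gives
\[
\sum_{k\geq T}G_k=\sum_{k\geq 1}G_k-\sum_{k=1}^{T-1}G_k\leq\frac{1}{2\cos(3\pi/8)}-\frac{1-\cos(\pi/8)\,D_{2T-1}}{2\cos(3\pi/8)}=\frac{\cos(\pi/8)}{2\cos(3\pi/8)}D_{2T-1},
\]
as desired.

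\textbf{Main obstacle.} I expect the principal difficulty to be the careful winding and phase bookkeeping for walks ending on the slanted sides of the triangle: the Jordan-curve argument must be adapted so that the closure runs along the triangle's boundary rather than the real axis, the outward normals and dual-edge orientations must be tracked consistently with the counterclockwise traversal of $\Pi$, and the coefficient $\cos(\pi/8)$ (rather than, say, $\cos(3\pi/8)$) must fall out of the combination $\tfrac{5\pi}{24}+\tfrac{2\pi}{3}=\tfrac{7\pi}{8}$. This relies crucially on the equilateral geometry of $\textup{Tria}_{2T-1}$, the special value $\sigma=5/8$, and the alignment of the slanted sides with lattice directions.
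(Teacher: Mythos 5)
Your proposal is correct and follows essentially the same route as the paper: both inequalities come from the parafermionic contour identity, first in (a truncation of) the upper half-plane, where the boundary terms other than the real-axis one are nonnegative and can be discarded before passing to the limit (the paper cites the resulting strip identity $\cos(3\pi/8)A_T+B_T=1$ from Glazman--Manolescu), and then in $\textup{Tria}_{2T-1}$, where the exact identity $\cos(3\pi/8)A^{\triangle}_{2T-1}+\cos(\pi/8)D_{2T-1}=1$ combined with $A^{\triangle}_{2T-1}\leq 2\sum_{k=1}^{T-1}G_k$ gives the second bound exactly as in your final display.
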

\begin{proof}
For the first part, let $A_T$ be the partition function of SAWs $\gamma:0\rightarrow (\mathbb{R}\times \{0\})\cap H$ which live in $\textup{Strip}_T$ and have $\ell(\gamma)>0$.
Using the parafermionic observable one can show, see \cite[Corollary 2.2]{GM20} for the detailed proof, that for every $T\geq 1$ we have
\begin{equation}\label{eq: B_T local}
\cos\left(\frac{3 \pi}{8}\right) A_T+B_T=1.
\end{equation}
Since $A_T$ converges to $\sum_{k\in \mathbb{Z}\setminus \{0\}} G_k=2\sum_{k=1}^{\infty} G_k$ as $T$ tends to infinity and $B_T\geq 0$, the desired result follows.

For the second part, we again use the parafermionic observable. Let $A^{\triangle}_{2T-1}$ be the partition of SAWs $\gamma:0\rightarrow (\mathbb{R}\times \{0\})\cap H$ which live in $\textup{Tria}_{2T-1}$ and have $\ell(\gamma)>0$. Recall Lemma~\ref{lem:relation} and Remark~\ref{rem: contour integral}. The contour integral of $F$ along the boundary of $\textup{Tria}_{2T-1}$ being $0$ gives 
\begin{equation}\label{eq: D_T local}
\cos\left(\frac{3 \pi}{8}\right) A^{\triangle}_{2T-1}+\cos\left(\frac{\pi}{8}\right) D_{2T-1}=1,
\end{equation}
where the term on the right-hand side comes from the trivial walk of length $0$. Using the first part of the lemma we obtain
\[
2\cos\left(\frac{3 \pi}{8}\right)\sum_{k=1}^{\infty} G_k-\cos\left(\frac{3 \pi}{8}\right)A^{\triangle}_{2T-1}\leq 1-\cos\left(\frac{3 \pi}{8}\right)A^{\triangle}_{2T-1}=\cos\left(\frac{\pi}{8}\right) D_{2T-1}.
\]
Since all walks contributing to $A^{\triangle}_{2T-1}$ contribute also to $2\sum_{k=1}^{T-1} G_k$, we have that 
\[
2\sum_{k=1}^{\infty} G_k-A^{\triangle}_{2T-1}\geq 2\sum_{k=T}^{\infty} G_k,
\]
from which the desired result follows.
\end{proof}

\begin{rem}
In fact, since $B_T$ tends to zero as $T$ tends to infinity \cite{GM20}, the first inequality of Lemma \ref{lem: G sum} is an equality. However, we will not need this fact. 
\end{rem}

\begin{lemma}\label{lem: B-D ineq}
The sequences $(B_k)_{k\geq 0}$ and $(D_k)_{k\geq 0}$ are non-increasing and  
\[
B_k\leq \cos\left(\frac{\pi}{8}\right) D_k
\]
for every $k\geq 0$.
\end{lemma}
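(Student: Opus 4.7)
The plan is to deduce the lemma directly from the two local identities (\ref{eq: B_T local}) and (\ref{eq: D_T local}) that were already established in the course of proving Lemma~\ref{lem: G sum}; no further appeal to the parafermionic observable is needed. All three statements will follow from elementary domain inclusions.

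For the monotonicity of $(B_k)_{k\geq 0}$, I rewrite (\ref{eq: B_T local}) as $B_T = 1 - \cos(3\pi/8)\,A_T$. Any SAW of positive length starting at $0$, ending on the real axis, and contained in $\textup{Strip}_T$ is automatically contained in the taller strip $\textup{Strip}_{T+1}$, so $A_T \leq A_{T+1}$ and therefore $B_{T+1} \leq B_T$ for every $T \geq 0$. The same reasoning applied to (\ref{eq: D_T local}), this time using the inclusion $\textup{Tria}_{2T-1}\subset \textup{Tria}_{2T+1}$, yields $A^\triangle_{2T-1}\leq A^\triangle_{2T+1}$ and hence $D_{2T+1}\leq D_{2T-1}$; combined with the definitional equality $D_{2T} = D_{2T-1}$, this shows that the full sequence $(D_k)_{k\geq 0}$ is non-increasing (the step $D_0 = 1/\cos(\pi/8) \geq D_1$ is immediate from (\ref{eq: D_T local}) at $T=1$).

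For the main inequality $B_k \leq \cos(\pi/8)\, D_k$, I split on the parity of $k$. For odd $k = 2T-1$, the key geometric observation is that $\textup{Tria}_{2T-1} \subset \textup{Strip}_{2T-1}$: both domains have the same height $\tfrac{\sqrt{3}}{2}(2T-1)$, and the triangle is bounded laterally in addition. Thus every walk counted by $A^\triangle_{2T-1}$ is also counted by $A_{2T-1}$, so $A^\triangle_{2T-1}\leq A_{2T-1}$. Subtracting (\ref{eq: B_T local}) from (\ref{eq: D_T local}) then gives
\begin{equation*}
\cos(\pi/8)\,D_{2T-1} - B_{2T-1} \;=\; \cos(3\pi/8)\bigl(A_{2T-1}-A^\triangle_{2T-1}\bigr) \;\geq\; 0.
\end{equation*}
For even $k = 2T \geq 2$, I chain the monotonicity of $B$ with the definitional equality for $D$:
\begin{equation*}
B_{2T} \;\leq\; B_{2T-1} \;\leq\; \cos(\pi/8)\,D_{2T-1} \;=\; \cos(\pi/8)\,D_{2T}.
\end{equation*}
The remaining case $k = 0$ is direct and actually gives equality: $B_0 = 1 = \cos(\pi/8)\cdot(1/\cos(\pi/8)) = \cos(\pi/8)\,D_0$.

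The substantive analytic work is already encapsulated in the two identities produced by the observable, so I do not anticipate any genuine obstacle. The only point requiring care is the bookkeeping between the odd- and even-index conventions for $D_k$: one must establish the monotonicity of $B$ before invoking it in order to reduce the even case of the inequality to the odd case.
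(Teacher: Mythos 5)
Your proof is correct and follows essentially the same route as the paper: monotonicity of both sequences from the identities \eqref{eq: B_T local} and \eqref{eq: D_T local} via domain inclusions, and the comparison $B_{2T-1}\leq\cos(\pi/8)D_{2T-1}$ by subtracting the two identities and using $\textup{Tria}_{2T-1}\subset\textup{Strip}_{2T-1}$, with the even case reduced to the odd one exactly as in the paper. (Your displayed identity even correctly retains the factor $\cos(3\pi/8)$ that the paper's write-up drops, though this does not affect the sign argument.)
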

\begin{proof}
We start with $(B_k)_{k\geq 0}$. Applying \eqref{eq: B_T local} for $T=k\geq 0$ and $T=k+1$ we obtain   
\[
B_k-B_{k+1}=\cos\left(\frac{3\pi}{8}\right) \left(A_{k+1}-A_k\right).
\]
Since all walks contributing to $A_k$ contribute also to $A_{k+1}$, we have that $A_{k+1}\geq A_k$, which implies that $B_{k+1}\leq B_k$, as desired.

For the monotonicity of $(D_k)_{k\geq 0}$, first note that $D_1\leq D_0:=1/\cos\left(\frac{\pi}{8}\right)$ follows easily from \eqref{eq: D_T local}. Since $D_{2k+2}:=D_{2k+1}$ by definition, the monotonicity of $(D_k)_{k\geq 0}$ is then equivalent to the assertion that $D_{2k-1}\leq D_{2k+1}$ for all $k\geq 1$. Since $A^{\triangle}_{2T-1}$ is trivilly non-decreasing, this follows from \eqref{eq: D_T local}.

Finally, we compare the two sequences. Note that for every $k\geq 0$,
\[
B_{2k+1}-\cos\left(\frac{\pi}{8}\right)D_{2k+1}= A^{\triangle}_{2k+1}-A_{2k+1}\leq 0,
\]
which proves the desired inequality for odd indices. For the case of even indices, we have $B_0= \cos\left(\frac{\pi}{8}\right) D_0$ by definition, and furthermore, for every $k\geq 1$ we have
\[
B_{2k}\leq B_{2k-1}\leq \cos\left(\frac{\pi}{8}\right) D_{2k-1}=\cos\left(\frac{\pi}{8}\right) D_{2k}.
\]
\end{proof}

%\section{Outline of the proof}

\section{Polynomial decay of $B_T$}

In this section, we prove that $D_T$, and hence $B_T$, decays polynomially fast to $0$.

Our starting point is a construction by Glazman and Manolescu \cite{GM20} where they concatenated three triangle walks to form a $U$-walk. This construction led them to the inequality 
\begin{equation}\label{eq: Shasha Mano}
(D_{8T+1})^3\leq 8\sum_{k=T}^{9T} G_k.    
\end{equation}
See Figure~\ref{fig: 3 triangles}. Our aim is to improve on the latter inequality by showing that $D_{8T+1}$ is comparable with $\sum_{k=T}^{cT} G_k$ for some $c>0$,
which combined with Lemma~\ref{lem: G sum} implies that the infinite sum $\sum_{k=T+1}^{\infty} G_k$ is comparable with the finite sum $\sum_{k=T}^{cT} G_k$.
This comparison holds only if $\sum_{k=T+1}^{\infty} G_k$ decays polynomially fast to $0$, which in turn implies a similar decay for $D_{T}$. The actual inequality we obtain is more involved, see Corollary \ref{cor: final ineq}.

\begin{figure}[ht]
    \centering
    \includegraphics[width=.5\linewidth]{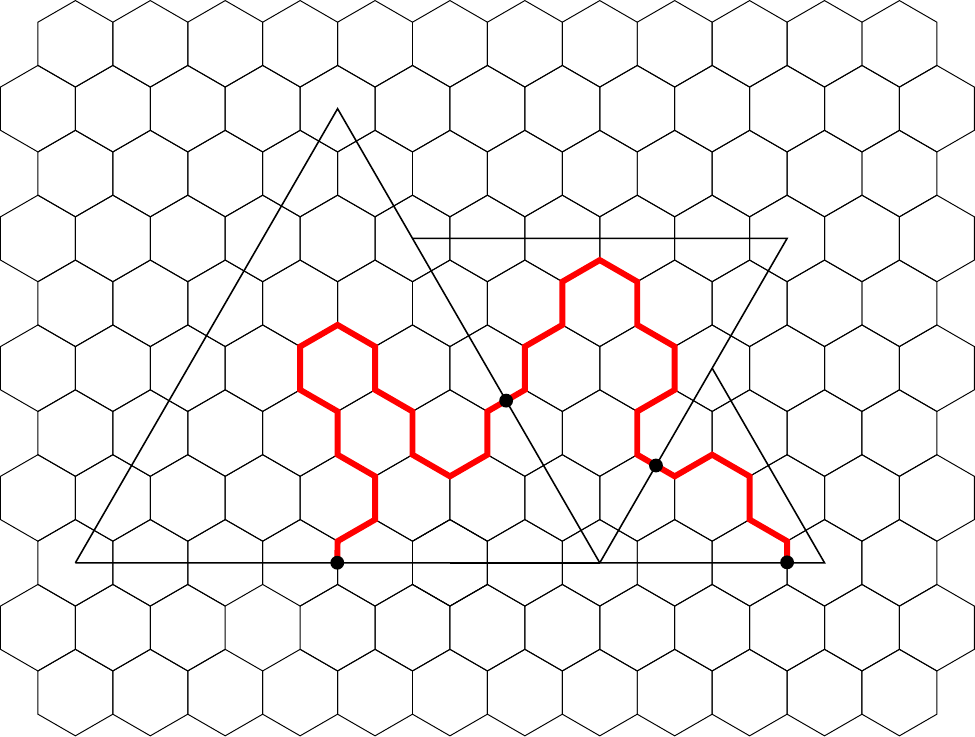}
     \caption{The concatenation of three triangle walks that creates a $U$-walk.}
    \label{fig: 3 triangles}
\end{figure}

\subsection{Number of renewal times}

We start by introducing the following notion.
Let $k\geq 0$ be an integer. We say that an integer $0\leq i\leq k$ is a renewal time for a SAW $\gamma\in D(\textup{Tria}_{2k+1})$ if $\gamma$ intersects the line $i+1/2+e^{2\pi i/3} \mathbb{R}$ exactly once\footnote{For our purposes, we are interested only in walks ending on $\textup{R}_{2k+1}$. For this reason, we define renewal times only with respect to lines parallel to $\textup{R}_{2k+1}$.}. Note that by definition, $k$ is a renewal time whenever the walk ends on the right side of $\textup{Tria}_{2k+1}$. We write $N=N(\gamma)$ for the number of renewal times of $\gamma$.

In what follows, $\mathbb{P}^{\textup{Tria}}_{2k+1}$ denotes the probability measure on $D(\textup{Tria}_{2k+1})$ defined by 
\[
\mathbb{P}^{\textup{Tria}}_{2k+1}[\gamma]=\frac{x_c^{\ell(\gamma)}}{D_{2k+1}}.
\]
We write $\mathbb{E}^{\textup{Tria}}_{2k+1}$ for the expectation under $\mathbb{P}^{\textup{Tria}}_{2k+1}$.

\begin{lemma}\label{lem: expecation}
For every $k\geq 1$ we have
\[
\mathbb{E}^{\textup{Tria}}_{2k+1}[N]\leq 2\cos\left(\frac{\pi}{8}\right)\frac{D_{\lceil \frac{k+1}{2} \rceil}}{D_{2k+1}}\sum_{i=0}^{\lfloor \frac{k+1}{2} \rfloor} D_i. 
\]
\end{lemma}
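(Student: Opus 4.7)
Writing
\[
\mathbb{E}^{\textup{Tria}}_{2k+1}[N] = \frac{1}{D_{2k+1}}\sum_{i=0}^{k} Z_i, \qquad Z_i := \sum_{\substack{\gamma \in D(\textup{Tria}_{2k+1}) \\ i \text{ is a renewal time of } \gamma}} x_c^{\ell(\gamma)},
\]
I would bound each $Z_i$ by decomposing $\gamma$ at its unique edge-crossing of the line $L_i := i + 1/2 + e^{2\pi \I/3}\mathbb{R}$. This splits $\gamma$ uniquely into the concatenation of a SAW $\gamma_1$ from $0$ to a mid-edge $u$ strictly left of $L_i$, contained in the equilateral triangular region of $\textup{Tria}_{2k+1}$ to the left of $L_i$ (of side $k+i+1$), and a SAW $\gamma_2$ from the adjacent mid-edge $v$ strictly right of $L_i$ to the endpoint on $\textup{R}_{2k+1} \cup \textup{L}_{2k+1}$, contained in the trapezoidal region to the right of $L_i$. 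By multiplicativity of $x_c^{\ell(\cdot)}$ and dropping the disjointness constraint between $\gamma_1$ and $\gamma_2$, $Z_i$ is bounded by a product of partition functions of the two pieces.

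The key geometric observation is that $L_i$ contains the right side $\textup{R}_{2i+1}$ of the smaller triangle $\textup{Tria}_{2i+1}$ centered at $0$, and by reflecting the picture about the vertical axis, a renewal time $i$ with respect to the lines parallel to $\textup{R}_{2k+1}$ corresponds to a renewal time $k-i$ with respect to the mirror family of lines after swapping the roles of $\textup{R}_{2k+1}$ and $\textup{L}_{2k+1}$. Combining this with the comparison $B_i \leq \cos(\pi/8) D_i$ from Lemma~\ref{lem: B-D ineq} and the identity \eqref{eq: D_T local} applied inside $\textup{Tria}_{2\min(i,k-i)+1}$, I aim to establish
\[
Z_i \leq \cos\!\Big(\frac{\pi}{8}\Big) D_{\min(i, k-i)} \cdot D_{\lceil(k+1)/2\rceil}.
\]
The first factor comes from the short piece that crosses to $L_i$ (or to its mirror), controlled via the contour integral identity in the smaller triangle and the B-D comparison. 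The second factor is a uniform bound on the longer piece, obtained by embedding its trapezoidal domain in a shifted triangle of side $2\lceil(k+1)/2\rceil + 1$ and invoking the monotonicity of $D_j$ from Lemma~\ref{lem: B-D ineq}.

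Summing the above estimate over $i$ and using the change of variables $i \mapsto k - i$ for $i > \lfloor(k+1)/2\rfloor$ yields
\[
\sum_{i=0}^k Z_i \leq 2\cos\!\Big(\frac{\pi}{8}\Big) D_{\lceil(k+1)/2\rceil} \sum_{i=0}^{\lfloor(k+1)/2\rfloor} D_i,
\]
from which the claim follows upon dividing by $D_{2k+1}$.

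\textbf{Main obstacle.} The delicate step is establishing $Z_i \leq \cos(\pi/8) D_{\min(i,k-i)} D_{\lceil(k+1)/2\rceil}$. The difficulty is that after the decomposition at $L_i$, the shorter piece $\gamma_1$ naturally lives in an \emph{off-center} triangle of side $k+i+1$ rather than in $\textup{Tria}_{2i+1}$, and its starting mid-edge $0$ is offset from the bottom midpoint of this larger triangle; so $\gamma_1$ is not a standard triangle walk, and a naive domain-monotonicity comparison goes in the wrong direction. Cleanly relating the partition function of $\gamma_1$-walks to $D_i$ therefore likely requires applying the contour integral identity of Remark~\ref{rem: contour integral} in the off-center triangular domain and controlling the boundary contributions from sides other than $L_i$, possibly absorbing them using Lemma~\ref{lem: G sum}.
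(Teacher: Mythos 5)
Your skeleton --- factor $Z_i$ at the unique crossing of the line $L_i$, bound the two factors separately, and pair the indices $i \leftrightarrow k-i$ when summing --- is exactly the paper's argument, and your final summation is fine. But the step you yourself flag as the ``main obstacle'' is a genuine gap as written, and the paper closes it more simply than you anticipate. The contour-integral identity of Remark~\ref{rem: contour integral}, applied to the off-center equilateral triangle $T_{k,i}$ (the part of $\textup{Tria}_{2k+1}$ to the left of $L_i$, of side $k+i+1$, with $0$ on its bottom side), has \emph{exactly} the form of \eqref{eq: D_T local}: the winding weights depend only on the corner angles and on which side of the triangle the walk exits, not on where $0$ sits along the bottom. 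Hence $\cos\left(\frac{3\pi}{8}\right)A(T_{k,i})+\cos\left(\frac{\pi}{8}\right)\Delta_{k,i}=1$, where $A(T_{k,i})$ counts walks returning to the real axis and $\Delta_{k,i}$ counts walks to the two slanted sides. Since $T_{k,i}\supseteq \textup{Tria}_{2i+1}$, one has $A(T_{k,i})\geq A^{\triangle}_{2i+1}$, and the identity converts this into $\Delta_{k,i}\leq D_{2i+1}$. There are no extra boundary contributions to absorb and no need for Lemma~\ref{lem: G sum}: the ``wrong-direction'' domain monotonicity you worry about is inverted by the identity itself, because enlarging the domain increases the real-axis term and therefore \emph{decreases} the side term.

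Your treatment of the other factor is also the wrong mechanism. The piece $\gamma_2$ to the right of $L_i$ ends on $\textup{R}_{2k+1}$, which is parallel to $L_i$; it is therefore not a triangle walk in any containing triangle, and ``embedding the trapezoid in a shifted triangle of side $2\lceil(k+1)/2\rceil+1$'' neither makes geometric sense (the trapezoid is too large) nor gives an upper bound (partition functions increase with the domain). The correct observation is that, since the crossing of $L_i$ is unique, $\gamma_2$ stays weakly to the right of $L_i$ and ends on the parallel line through $\textup{R}_{2k+1}$, so after rotation it is a bridge in $\textup{Strip}_{k-i}$; relaxing the trapezoid constraint to the strip bounds its partition function by $B_{k-i}\leq \cos\left(\frac{\pi}{8}\right)D_{k-i}$ via Lemma~\ref{lem: B-D ineq}. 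This yields $Z_i\leq D_{2i+1}B_{k-i}\leq \cos\left(\frac{\pi}{8}\right)D_{i+1}D_{k-i}$, and since for each $i$ one of the indices $i+1$, $k-i$ is at least $\lceil\frac{k+1}{2}\rceil$ while the other is at most $\lfloor\frac{k+1}{2}\rfloor$, the claimed bound follows with each $j\leq\lfloor\frac{k+1}{2}\rfloor$ arising at most twice. In particular the reflection about the vertical axis is a red herring: it does not exchange the roles of $i$ and $k-i$ in the decomposition (the piece containing $0$ still lives in a triangle of side $k+i+1$ after reflecting), and no symmetrization is needed beyond the elementary pairing of factors just described.
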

\begin{proof}
Let $i\geq 0$. We estimate the probability that $i$ is a renewal time. Consider a SAW $\gamma\in D(\textup{Tria}_{2k+1})$ such that $i$ is a renewal time. Let $x$ be the mid-edge in the line $i+1/2+e^{2\pi i/3}\mathbb{R}$ that $\gamma$ visits. Note that the sub-walk of $\gamma$ from $x$ until its endpoint is (the rotation and translation of) a bridge within $\textup{Strip}_{k-i}$; whilst the sub-walk of $\gamma$ from $0$ to $x$ lies in the triangle $T_{k,i}$ which is defined as the intersection of $\textup{Tria}_{2k+1}$ with the half-plane on the left side of the line $i+1/2+e^{2\pi i/3}\mathbb{R}$. See Figure~\ref{fig: renewal}. Let us write $R_{k,i}$ and $L_{k,i}$ for the right and left side of $T_{k,i}$, respectively, and define
\[
\Delta_{k,i}=\sum_{\substack{\gamma\subset T_{k,i}:\\ \gamma:0\rightarrow R_{k,i}\cup L_{k,i}}} x_c^{\ell(\gamma)}.
\]

\begin{figure}[ht]
    \centering
    \includegraphics[width=.4\linewidth]{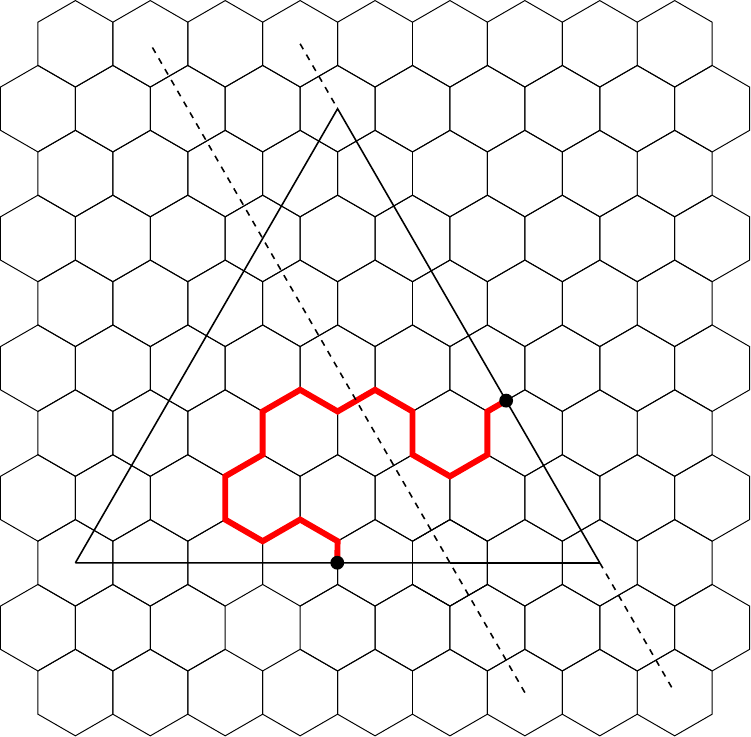}
    \caption{The decomposition of a walk in the proof of Lemma~\ref{lem: expecation}. The dashed line on the left gives rise to an (off-centered) equilateral triangle and together with the other dashed line it defines a strip.}
    \label{fig: renewal}
\end{figure}

Since there are more SAWs in $T_{k,i}$ that start at $0$ and end on the real axis than SAWs in $\textup{Tria}_{2i+1}$ that start at $0$ and end on the real axis, we can use the observable as in the proof of Lemma~\ref{lem: B-D ineq} to deduce that $\Delta_{k,i}\leq D_{2i+1}$.
Thus 
\[
\mathbb{P}^{\textup{Tria}}_{2k+1}[i \text{ is a renewal time}]\leq \frac{D_{2i+1}B_{k-i}}{D_{2k+1}}.
\]

Summing over all $i$ we obtain
\begin{equation*}
\mathbb{E}^{\textup{Tria}}_{2k+1}[N]
\leq 
\frac{1}{D_{2k+1}}\sum_{i=0}^k D_{2i+1}B_{k-i}
\leq  
\frac{\cos\left(\frac{\pi}{8}\right)}{D_{2k+1}}\sum_{i=0}^k D_{i+1}D_{k-i}
\leq 2\cos\left(\frac{\pi}{8}\right) \frac{D_{\lceil \frac{k+1}{2} \rceil}}{D_{2k+1}}\sum_{i=0}^{\lfloor \frac{k+1}{2} \rfloor} D_i.
\end{equation*}
In the second inequality we used that $D_{2i+1}\leq D_{i+1}$ and $B_{k-i}\leq \cos\left(\frac{\pi}{8}\right) D_{k-i}$. In the third inequality we used that $D_{i+1}\leq D_{\lceil \frac{k+1}{2} \rceil}$ for $i\geq \lceil \frac{k+1}{2} \rceil-1$, and similarly, $D_{k-i}\leq D_{\lceil \frac{k+1}{2} \rceil}$ for $i< \lceil \frac{k+1}{2} \rceil-1$. This completes the proof.
\end{proof}

\subsection{Geometric construction and recurrence inequalities}

With the above result in hands, we now start describing our construction. Along the way we need to consider two cases.

\textbf{Construction:} Let $T\geq 1$. For each $T\leq k \leq 2T-1$, we consider a walk $\gamma_1\in D(\textup{Tria}_{2k+1})$ which ends on the right side of $\textup{Tria}_{2k+1}$. Now depending on the endpoint $x$ of $\gamma_1$, for every $4T\leq i \leq 5T-1$, we let $\textup{Tria}_{2i+1,x}:=x+e^{-\pi i/3}\textup{Tria}_{2i+1}$. Note that for every choice of $i$, $\textup{Tria}_{2i+1,x}$ intersects the real axis $\mathbb{R}\times \{0\}$ and it has non-trivial intersection with the lower half-plane. We consider a SAW $\gamma_2$ lying in $\textup{Tria}_{2i+1,x}$ which starts at $x$ and ends on the right side of $\textup{Tria}_{2i+1,x}$. Now either $\gamma_2$ does not intersect $\mathbb{R}\times \{0\}$ or it does. In the first case, we glue another triangle SAW $\gamma_3$ in order to obtain a $U$-walk. In the second case, we use the observable to “cut” the part of $\gamma_2$ after its first intersection with the real axis. In order to handle the number of ways to reconstruct each $U$-walk we obtained, we need to restrict ourselves to SAWs $\gamma_1,\gamma_2$ whose number of renewal times is not much larger than its mean. 

We now describe the above construction in more details. We define $D^-_{2i+1,x}$ to be the partition function of SAWs contained in $\textup{Tria}_{2i+1,x}$ which start at $x$, end on the right side of $\textup{Tria}_{2i+1,x}$ and intersect the real axis.

For each mid-edge $x$ on the right side of $\textup{Tria}_{2k+1}$ there are two possibilities: Either there exists $4T\leq i_x\leq 5T-1$ such that 
\begin{equation}\label{eq: def i_x}
D^-_{2i_x+1,x} \geq D_{2i_x+1}/4
\end{equation} 
or 
\begin{equation}\label{eq: non i_x}
D^-_{2i+1,x}<D_{2i+1}/4 \text{  for every  } 4T\leq i \leq 5T-1.
\end{equation}
Let us write $\Sigma_{2k+1}$ for the set of $x\in \textup{R}_{2k+1}$ for which the former happens. We now consider the following two cases:
\begin{enumerate}[label=\textbf{(\alph*)}]
   \item \label{case: below line} $\sum_{k=T}^{2T-1}\sum_{x\in \Sigma_{2k+1}} D_{2k+1}(x) \geq \sum_{k=T}^{2T-1} D_{2k+1}/4$,
    \item \label{case: above line} $\sum_{k=T}^{2T-1}\sum_{x\in \Sigma_{2k+1}} D_{2k+1}(x) < \sum_{k=T}^{2T-1} D_{2k+1}/4$.
\end{enumerate}
\noindent
We will handle each of these cases separately. 

Let us start from the first case. We will need the following definitions. For each $x\in \Sigma_{2k+1}$, let us fix an integer $4T\leq i_x\leq 5T-1$ such that $D^-_{2i_x+1,x} \geq D_{2i_x+1}/4$. Write $\textup{Trap}_{2i+1,x}$ for the trapezoid which is defined as the intersection of $\textup{Tria}_{2i+1,x}$ with the upper half-plane $\textup{U}$. Let $F^R_{2i+1,x}, F^L_{2i+1,x}, F^T_{2i+1,x}, F^B_{2i+1,x}$ be the partition functions of SAWs contained in $\textup{Trap}_{2i+1,x}$ which start at $x$ and end on the right, left, top and bottom side of $\textup{Trap}_{2i+1,x}$, respectively.

For a mid-edge $x$ on the right or the left side of $\textup{Tria}_{2k+1}$ we define 
\[
D_{2k+1}(x):=\sum_{\substack{\gamma\in D(\textup{Tria}_{2k+1}):\\ \gamma:0\rightarrow x}} x_c^{\ell(\gamma)}
\] 
and
\[
D^\textup{ren}_{2k+1}(x):=\sum_{\substack{\gamma\in D(\textup{Tria}_{2k+1}):\\ \gamma:0\rightarrow x,\\ N(\gamma)\leq M_k}} x_c^{\ell(\gamma)},
\] 
where 
\[
M_k:=16\cos\left(\frac{\pi}{8}\right)\frac{D_{\lceil \frac{k+1}{2} \rceil}}{D_{2k+1}}\sum_{i=0}^{\lfloor \frac{k+1}{2} \rfloor} D_i,
\]
which is to be compared with the bound in Lemma~\ref{lem: expecation}.

\begin{lemma}\label{lem: first case}
Let $T\geq 1$. Assume that case~\ref{case: below line} happens. Then 
\begin{equation}\label{eq:inequality-case-a}
T D^2_{10T}\leq 256\sqrt{2}\frac{D_{\lceil \frac{T+1}{2} \rceil}}{D_{4T}}\sum_{i=0}^T D_i \sum_{\ell=T}^{7T} G_\ell.
\end{equation}
\end{lemma}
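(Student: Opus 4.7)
The plan is to bound from below and above the weighted sum
\[
S \;:=\; \sum_{k=T}^{2T-1} \sum_{x\in\Sigma_{2k+1}} D^{\textup{ren}}_{2k+1}(x)\,D^-_{2i_x+1,x},
\]
obtaining matching estimates. The lower bound uses the hypothesis of case~\ref{case: below line} combined with Markov's inequality; the upper bound interprets each contributing quadruple $(k,x,\gamma_1,\gamma_2)$ as producing, after truncating $\gamma_2$ at its first intersection with the real axis, a $U$-walk ending at $(\ell,0)$ with $\ell\in[T,7T]$.

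For the lower bound, Markov's inequality applied to Lemma~\ref{lem: expecation} together with the choice $M_k=8\,\mathbb{E}^{\textup{Tria}}_{2k+1}[N]$ shows that the partition function of $\gamma\in D(\textup{Tria}_{2k+1})$ with $N(\gamma)>M_k$ is at most $D_{2k+1}/8$, so $\sum_{x\in\Sigma_{2k+1}} D^{\textup{ren}}_{2k+1}(x)\geq \sum_{x\in\Sigma_{2k+1}} D_{2k+1}(x)-D_{2k+1}/8$. Summing over $k\in[T,2T-1]$ and invoking case~\ref{case: below line} yields $\sum_k\sum_{x\in\Sigma_{2k+1}}D^{\textup{ren}}_{2k+1}(x)\geq \tfrac18\sum_k D_{2k+1}\geq \tfrac{T}{8}D_{4T}$, using monotonicity of $(D_k)$ and $D_{4T-1}=D_{4T}$. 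Combining with $D^-_{2i_x+1,x}\geq D_{2i_x+1}/4\geq D_{10T}/4$ and $D_{4T}\geq D_{10T}$ gives $S\geq TD_{10T}^2/32$.

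For the upper bound, given a contributing quadruple, decompose $\gamma_2=\alpha_2\cup\beta_2$ with $\alpha_2$ the portion of $\gamma_2$ up to its first intersection $y$ with the real axis, and set $\widetilde\gamma:=\gamma_1\cup\alpha_2$. Since $\textup{Tria}_{2k+1}$ and $\textup{Trap}_{2i_x+1,x}$ share only a portion of the right side of $\textup{Tria}_{2k+1}$ containing $x$, the concatenation is self-avoiding, lies in the upper half-plane, and ends at some $(\ell,0)$, so it is a $U$-walk; the ranges $k\in[T,2T-1]$ and $i_x\in[4T,5T-1]$ force $\ell\in[T,7T]$. For the multiplicity of $(k,x,\gamma_1,\gamma_2)\mapsto(\widetilde\gamma,\beta_2)$: given $\widetilde\gamma$ and $k$ the split $x$ is uniquely the first crossing of $\widetilde\gamma$ with the right side of $\textup{Tria}_{2k+1}$; and if $k_1<k_2$ are both valid, the longer prefix $\gamma_1^{(k_2)}$ meets the line $k_1+1/2+e^{2\pi i/3}\mathbb{R}$ exactly once (at the endpoint of $\gamma_1^{(k_1)}$), so $k_1$ is a renewal time of $\gamma_1^{(k_2)}$. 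Hence the number of valid $k$'s per $\widetilde\gamma$ is at most $N(\gamma_1^{(k_{\max})})\leq M_{2T-1}\leq 16\,\frac{D_{\lceil(T+1)/2\rceil}}{D_{4T}}\sum_{i=0}^T D_i$, using the definition of $M_k$ together with $D_T\leq D_{\lceil(T+1)/2\rceil}$ and $D_{4T-1}=D_{4T}$. The tail sum $\sum_{\beta_2}x_c^{\ell(\beta_2)}$ is controlled by applying Lemma~\ref{lem:relation} along the boundary of $\textup{Trap}_{2i_x+1,x}$: positivity of the partition functions $F^R,F^L,F^T$ in the resulting trapezoidal identity yields the absolute bound $1/\sqrt 2$. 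Combining,
\[
S \leq \tfrac{1}{\sqrt 2}\,M_{2T-1}\sum_{\widetilde\gamma}x_c^{\ell(\widetilde\gamma)} \leq 8\sqrt{2}\,\frac{D_{\lceil(T+1)/2\rceil}}{D_{4T}}\sum_{i=0}^T D_i\sum_{\ell=T}^{7T}G_\ell,
\]
which together with the lower bound gives \eqref{eq:inequality-case-a}.

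The delicate ingredients are (i) formalising the correspondence between valid split indices $k$ and renewal times of $\gamma_1^{(k_{\max})}$, requiring careful tracking of how $\widetilde\gamma$ first exits each nested $\textup{Tria}_{2k+1}$; and (ii) the trapezoidal parafermionic identity, whose non-standard angles at the four sides of $\textup{Trap}_{2i_x+1,x}$ must be computed explicitly, and whose application needs the self-avoidance constraint on $\beta_2$ to be shown not to weaken the bound (typically by ignoring it and accepting the harmless overcount). Both parts rely crucially on the hexagonal-lattice observable Lemma~\ref{lem:relation}.
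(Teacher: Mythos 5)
Your overall architecture matches the paper's: a lower bound on a cross term via Markov's inequality, Lemma~\ref{lem: expecation} and the case~\ref{case: below line} hypothesis, and an upper bound via a multivalued map onto $U$-walks whose multiplicity is controlled by renewal times (your observation that distinct valid split indices correspond to renewal times of the longest prefix is exactly the paper's argument, modulo replacing $M_{2T-1}$ by $\max_{k\in[T,2T-1]}M_k$, which is what the termwise monotonicity bound actually controls). The gap is in how you convert a walk contributing to $D^-_{2i_x+1,x}$ into a $U$-walk. You cut $\gamma_2$ at its first intersection $y$ with the real axis and claim that the sum over discarded tails $\beta_2$ is bounded by the absolute constant $1/\sqrt2$ ``by the trapezoidal identity.'' This step fails: $\beta_2$ does not live in $\textup{Trap}_{2i_x+1,x}$ (it is precisely the part of $\gamma_2$ that ventures into $\textup{Tria}_{2i_x+1,x}\cap\{y<0\}$, and it may recross the real axis), and its starting mid-edge $y$ is an \emph{interior} mid-edge of the triangle $\textup{Tria}_{2i_x+1,x}$. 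The parafermionic identity of Lemma~\ref{lem:relation} only yields partition-function identities when the source $a$ lies on $\partial\Omega$ (see the remark after the definition of $F$), so there is no observable-based bound for walks started at $y$. Nor is there a cheap substitute: the tails end anywhere on the right side of $\textup{Tria}_{2i_x+1,x}$, a boundary arc of length of order $T$, so even optimistic per-endpoint estimates would produce a factor of order $T$ rather than a constant, which would destroy inequality \eqref{eq:inequality-case-a}.

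The paper circumvents this by performing the ``cut'' at the level of partition functions rather than of individual walks. Writing the contour identities for both $\textup{Trap}_{2i_x+1,x}$ and $\textup{Tria}_{2i_x+1,x}$ and subtracting, one gets
\[
F^B_{2i_x+1,x}=\frac{1}{\cos\left(\frac{\pi}{4}\right)}\left(\cos\left(\tfrac{3\pi}{8}\right)\bigl(A_{2i_x+1}-F^L_{2i_x+1,x}\bigr)+\cos\left(\tfrac{\pi}{8}\right)\bigl(D_{2i_x+1}-F^R_{2i_x+1,x}-F^T_{2i_x+1,x}\bigr)\right)
\geq \frac{\cos\left(\frac{\pi}{8}\right)}{\cos\left(\frac{\pi}{4}\right)}D^-_{2i_x+1,x},
\]
since every walk counted by $D^-_{2i_x+1,x}$ lies in the triangle but not in the trapezoid. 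One then concatenates $\gamma_1$ directly with a walk contributing to $F^B_{2i_x+1,x}$ — which by definition already lives in the upper half-plane and ends on the real axis — so no tail sum ever arises. Your $\alpha_2$ is indeed such a walk, but the passage from $\sum_{\gamma_2}x_c^{\ell(\gamma_2)}=D^-_{2i_x+1,x}$ to $\sum_{\alpha_2}x_c^{\ell(\alpha_2)}\leq F^B_{2i_x+1,x}$ with only a constant loss is exactly the content of the displayed inequality, and it cannot be obtained by summing over tails. As written, your upper bound on $S$ is unjustified and the proof is incomplete.
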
 

\begin{figure}
    \centering
    \includegraphics[width=.5\linewidth]{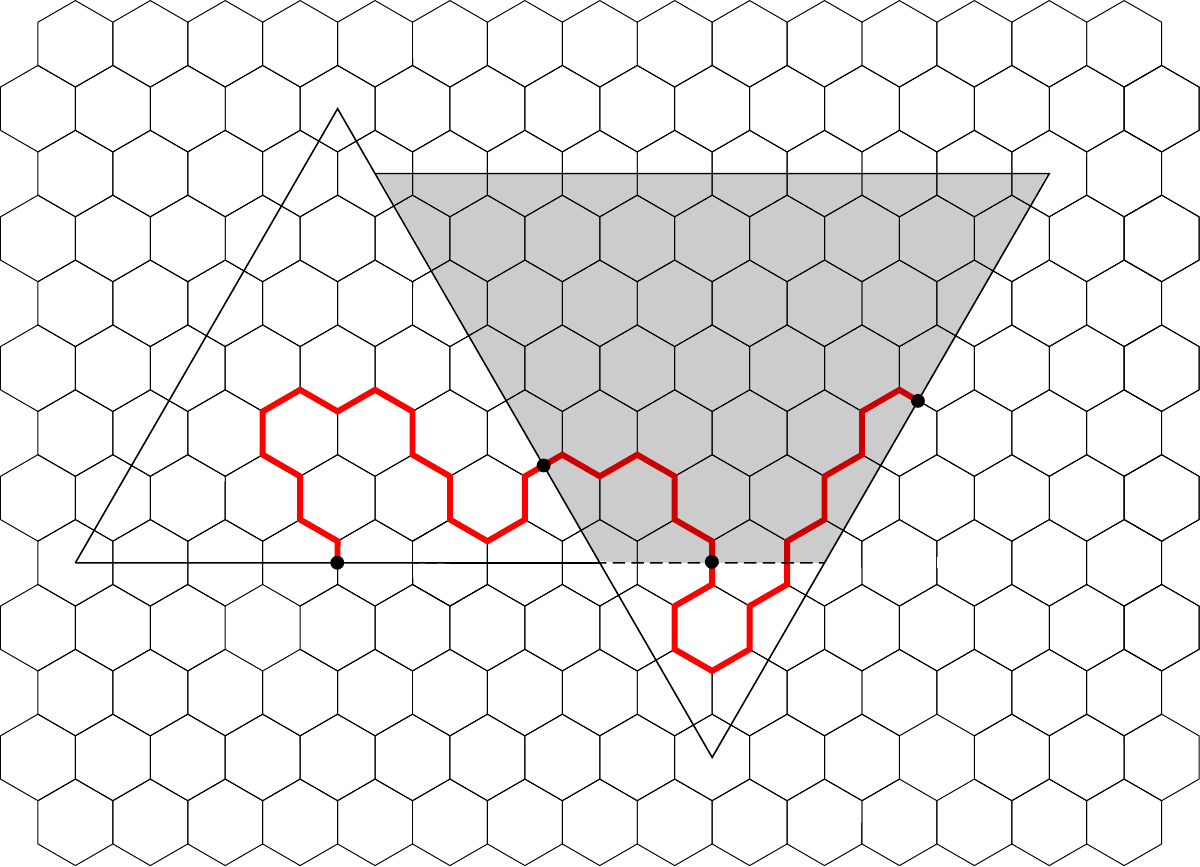}
     \caption{The construction in the case~\ref{case: below line}. The part of the real line intersecting the triangle on the right is indicated in dashed lines. From the walk in the triangle on the right we only keep its part before it hits the dashed line. The trapezoid $\textup{Trap}_{2i+1, x}$ is highlighted in gray.}
    \label{fig: below line}
\end{figure}

\begin{proof}
Our strategy will be to concatenate walks contributing to $\sum_{k=T}^{2T}\sum_{x\in \Sigma_{2k+1}} D_{2k+1}(x)$ with walks contributing to $F^B_{2i_x+1,x}$ to obtain a $U$-walk. See Figure~\ref{fig: below line}. Since each of the $U$-walks might have multiple pre-images, we work with $D^\textup{ren}_{2k+1}(x)$ instead of $D_{2k+1}(x)$ to control the number of the latter pre-images.

We start by noting that 
\[
\sum_{k=T}^{2T-1}\sum_{x\in \Sigma_{2k+1}} D^\textup{ren}_{2k+1}(x)\geq \sum_{k=T}^{2T-1}\sum_{x\in \Sigma_{2k+1}} D_{2k+1}(x)-\sum_{k=T}^{2T-1} D_{2k+1}/8
\]
by Markov's inequality and Lemma \ref{lem: expecation}. Then using our assumption we obtain 
\begin{equation}\label{eq: D star}
\sum_{k=T}^{2T-1}\sum_{x\in \Sigma_{2k+1}} D^\textup{ren}_{2k+1}(x)\geq \sum_{k=T}^{2T-1} D_{2k+1}/8.
\end{equation}

We now estimate $F^B_{2i_x+1,x}$. Recall Lemma~\ref{lem:relation} and Remark~\ref{rem: contour integral}. The contour integral along the boundary of the domains $\textup{Trap}_{2i_x+1,x}$ and $\textup{Tria}_{2i_x+1,x}$ being $0$ gives
\[
\cos\left(\frac{3\pi}{8}\right) F^L_{2i_x+1,x}+\cos\left(\frac{\pi}{8}\right) F^R_{2i_x+1,x}+ \cos\left(\frac{\pi}{8}\right) F^T_{2i_x+1,x}+\cos\left(\frac{\pi}{4}\right)F^B_{2i_x+1,x}=1
\]
and 
\[
\cos\left(\frac{3\pi}{8}\right) A_{2i_x+1}+\cos\left(\frac{\pi}{8}\right) D_{2i_x+1}=1.
\]
Thus
\[
F^B_{2i_x+1,x}= \frac{1}{\cos\left(\frac{\pi}{4}\right)}\left(\cos\left(\frac{3\pi}{8}\right) (A_{2i_x+1}-F^L_{2i_x+1,x})+\cos\left(\frac{\pi}{8}\right)(D_{2i_x+1}-F^R_{2i_x+1,x}-F^T_{2i_x+1,x})\right).
\]
Note that $A_{2i_x+1}-F^L_{2i_x+1,x}\geq 0$ since any walk contained in $\textup{Trap}_{2i_x+1,x}$ is also contained in $\textup{Tria}_{2i_x+1,x}$;
and $D_{2i_x+1}-F^R_{2i_x+1,x}-F^T_{2i_x+1,x}\geq D^-_{2i_x+1,x}$, since any walk contributing to $D^-_{2i_x+1,x}$ is contained in $\textup{Tria}_{2i_x+1,x}$ but not in $\textup{Trap}_{2i_x+1,x}$. This implies that
\[
F^B_{2i_x+1,x}\geq  \frac{\cos\left(\frac{\pi}{8}\right)}{\cos\left(\frac{\pi}{4}\right)}D^-_{2i_x+1,x}\geq \frac{\cos\left(\frac{\pi}{8}\right)}{4\cos\left(\frac{\pi}{4}\right)}D_{2i_x+1,x},
\]
where in the last inequality we used \eqref{eq: def i_x}.
Combining this with \eqref{eq: D star} and the monotonicity of $D_i$ we obtain 
\begin{equation}\label{eq: long ineq}
\begin{aligned}
\sum_{k=T}^{2T-1}\sum_{x\in \Sigma_{2k+1}} D^\textup{ren}_{2k+1}(x) F^B_{2i_x+1,x} \geq & \frac{\cos\left(\frac{\pi}{8}\right)}{4\cos\left(\frac{\pi}{4}\right)}\sum_{k=T}^{2T-1}\sum_{x\in \Sigma_{2k+1}} D^\textup{ren}_{2k+1}(x)D_{2i_x+1,x} \\ \geq &\frac{\cos\left(\frac{\pi}{8}\right)}{4\cos\left(\frac{\pi}{4}\right)} D_{10T} \sum_{k=T}^{2T-1}\sum_{x\in \Sigma_{2k+1}} D^\textup{ren}_{2k+1}(x) \\ \geq & \frac{\cos\left(\frac{\pi}{8}\right)}{32\cos\left(\frac{\pi}{4}\right)} D_{10T}\sum_{k=T}^{2T-1} D_{2k+1}\\
\geq & \frac{\cos\left(\frac{\pi}{8}\right)}{32\cos\left(\frac{\pi}{4}\right)} T D^2_{10T}.
\end{aligned}
\end{equation}

To upper bound the left hand side of the last inequality, note that concatenating a walk contributing to $D^\textup{ren}_{2k+1}(x)$ with a walk contributing to $F^B_{2i_x+1,x}$ results in a $U$-walk that contributes to $\sum_{\ell=T}^{7T} G_\ell$. It remains to estimate the number of ways to construct each $U$-walk in this sum. To this end, consider such a walk $\gamma$ and let $(\gamma_1,\gamma_2)$ be a pre-image of $\gamma$ such that $\gamma_1$ contributes to $D^\textup{ren}_{2k+1}(x)$ and $\gamma_2$ contributes to $F^B_{2i_x+1,x}$ for a certain $x$, with $k\in [T, 2T-1]$ being maximal among the pre-images. Then any other pre-image corresponds to a renewal time of $\gamma_1$, and by definition, the number of renewal times of $\gamma_1$ is at most $M_k$. Thus the number of pre-images is at most $M_k$, and by the multivalued map principle we have
\begin{equation}\label{eq:multivalued-bound-case-a}
 \sum_{k=T}^{2T-1}\sum_{x\in \Sigma_{2k+1}} D^\textup{ren}_{2k+1}(x) F^B_{2i_x+1,x} \leq \left(\max_{k \in [T, 2T-1]} M_k\right)\cdot \sum_{\ell=T}^{7T} G_\ell.
\end{equation}
 By the monotonicity of $D_k$ we have that for every $k\in [T, 2T-1]$ one has
\[
M_k\leq 16 \cos\left(\frac{\pi}{8}\right)\frac{D_{\lceil \frac{T+1}{2} \rceil}}{D_{4T-1}}\sum_{i=0}^T D_i.
\]
Thus, putting this together with inequalities \eqref{eq:multivalued-bound-case-a} and \eqref{eq: long ineq}  we obtain
\[
T D^2_{10T}\leq 512\cos\left(\frac{\pi}{4}\right)\frac{D_{\lceil \frac{T+1}{2} \rceil}}{D_{4T-1}}\sum_{i=0}^T D_i \sum_{\ell=T}^{7T} G_\ell,
\]
and the desired assertion follows.
\end{proof}

We now handle the second case. In what follows, we say that $k$ is a renewal time of some SAW $\gamma$ contributing to $F^R_{2i+1,x}$ if it is a renewal time of $e^{\pi i/3}(\gamma-x)$. We write $F^{R,\textup{ren}}_{2i+1,x}$ for the partition function of walks contributing to $F^R_{2i+1,x}$ with at most $M_i$ renewal times.

\begin{lemma}\label{lem: second case}
Let $T\geq 1$. Assume that case \ref{case: above line} happens. Then 
\begin{equation}\label{eq:inequality-case-b}
T^2 D^3_{18T}\leq 2^{15}\left(\frac{D_{\lceil \frac{T+1}{2} \rceil}}{D_{10T}}\sum_{i=0}^{\lfloor \frac{5T}{2} \rfloor} D_i\right)^2 \sum_{k=T}^{21T} G_k.
\end{equation}
\end{lemma}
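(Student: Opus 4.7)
The plan is to adapt the Glazman--Manolescu three-triangle construction, restricted to walks whose pre-images can be controlled by the renewal-time bound of Lemma~\ref{lem: expecation}. The $U$-walk is built by concatenating (i) a triangle walk $\gamma_1 \in D(\textup{Tria}_{2k+1})$ for $T \leq k \leq 2T-1$, ending at some $x \in \textup{R}_{2k+1}\setminus \Sigma_{2k+1}$; (ii) a walk $\gamma_2$ in $\textup{Trap}_{2i+1,x}$ for $4T\leq i\leq 5T-1$, starting at $x$ and ending at some $y$ on the right side of the trapezoid; and (iii) a third triangle walk $\gamma_3$ attached at $y$, rotated so that its right side lies on $\mathbb{R}\times\{0\}$. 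This is the situation complementary to case~\ref{case: below line}: since most $x$ are no longer in $\Sigma_{2k+1}$, walks from $x$ in the rotated triangle typically stay in the upper half-plane, giving room for $\gamma_2$ and requiring a third piece to reach $\mathbb{R}\times\{0\}$.

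The first step uses case~\ref{case: above line} to concentrate mass on good endpoints $x \notin \Sigma_{2k+1}$. Reflection symmetry gives $\sum_{x \in \textup{R}_{2k+1}} D_{2k+1}(x) = D_{2k+1}/2$, and the case~\ref{case: above line} hypothesis together with a Markov inequality based on Lemma~\ref{lem: expecation} yields
\[
\sum_{k=T}^{2T-1}\sum_{x\in\textup{R}_{2k+1}\setminus\Sigma_{2k+1}} D^{\textup{ren}}_{2k+1}(x) \;\geq\; \frac{1}{8}\sum_{k=T}^{2T-1} D_{2k+1}.
\]
The analytic heart of the proof is the lower bound on $F^R_{2i+1,x}$. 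Applying the parafermionic identity of Lemma~\ref{lem:relation} along the boundaries of $\textup{Trap}_{2i+1,x}$ and $\textup{Tria}_{2i+1,x}$ (exactly as in Lemma~\ref{lem: first case}, but now isolating $F^R$ rather than $F^B$), and combining it with $D^-_{2i+1,x}<D_{2i+1}/4$ (case~\ref{case: above line}) and the lattice-symmetry identity $D_{2i+1,x}=D_{2i+1}/2$, one obtains $F^R_{2i+1,x} \geq c\,D_{2i+1}$. A second Markov step applied to the renewal-time structure of $\gamma_2$, defined through the rotation $e^{\pi i/3}(\gamma-x)$ so that Lemma~\ref{lem: expecation} applies verbatim in the standard triangle frame, upgrades this to $F^{R,\textup{ren}}_{2i+1,x} \geq c'\,D_{2i+1}$. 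Finally, from any endpoint $y$ the third triangle walk $\gamma_3$ contributes at least $c''\,D_{2j+1}\geq c''\,D_{18T}$ for a third triangle of width $\sim 2T$ oriented with its right side on $\mathbb{R}\times\{0\}$, by lattice symmetry and Lemma~\ref{lem: B-D ineq}.

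Multiplying these three lower bounds and summing over $k\in[T,2T-1]$ and $i\in[4T,5T-1]$ gives a total of order $T^2 D_{18T}^3$. For the matching upper bound, each concatenation produces a $U$-walk ending on the real axis at distance $\ell\in[T,21T]$ from the origin, and the number of decompositions $(\gamma_1,\gamma_2,\gamma_3)$ of a given $U$-walk is at most $M_k\cdot M_i$: the first cut at $x$ must lie at one of the $\leq M_k$ renewal times of $\gamma_1$, and the second cut at $y$ at one of the $\leq M_i$ renewal times of $\gamma_2$ (in its rotated frame). Using the monotonicity of $D$, both $M_k$ and $M_i$ are dominated by $16\cos(\pi/8)\frac{D_{\lceil(T+1)/2\rceil}}{D_{10T}}\sum_{j=0}^{\lfloor 5T/2\rfloor} D_j$, so the multivalued map principle, together with the definition of $G_\ell$, yields inequality \eqref{eq:inequality-case-b} after collecting constants.

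The main obstacle is step two: extracting a useful lower bound on $F^R$ from the observable identity (since $F^R$ appears as a difference of several positive contributions, one must verify that the terms absent from the case~\ref{case: above line} hypothesis are either non-negative by inclusion of domains or controlled by lattice symmetry), and then setting up the renewal-time structure for $\gamma_2$ in the rotated trapezoid so that the renewal bound of Lemma~\ref{lem: expecation} carries over without loss of constants.
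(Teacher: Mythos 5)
Your proposal is correct and follows essentially the same route as the paper's proof: the same three-piece concatenation $\gamma_1,\gamma_2,\gamma_3$, the same Markov/renewal truncations producing $D^{\textup{ren}}_{2k+1}(x)$ and $F^{R,\textup{ren}}_{2i+1,x}$, and the same $M_k\cdot M_i$ pre-image count via the multivalued map principle over $\sum_{k=T}^{21T}G_k$. One clarification: the lower bound on $F^R_{2i+1,x}$ requires no parafermionic input at all --- since every walk in $\textup{Tria}_{2i+1,x}$ from $x$ to its right side either meets the real axis or stays in $\textup{Trap}_{2i+1,x}$, reflection symmetry gives the exact identity $F^R_{2i+1,x}=\tfrac{1}{2}D_{2i+1}-D^-_{2i+1,x}\geq \tfrac{1}{4}D_{2i+1}$ in case (b), whereas literally isolating $F^R$ from the contour identity (as your sketch first suggests) would leave the negative terms $F^T_{2i+1,x}$ and $F^B_{2i+1,x}$ uncontrolled.
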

\begin{proof}
We begin with the following construction.
Let $\gamma_1$ be a SAW contributing to $\sum_{k=T}^{2T-1}\sum_{x\in R_{2k+1}\setminus \Sigma_{2k+1}} D_{2k+1}(x)$. Let also $\gamma_2$ be a SAW contributing to $F^{R,\textup{ren}}_{2i+1,x}$ for some $4T\leq i \leq 5T-1$, where $x$ is the endpoint of $\gamma_1$. We denote $y$ the endpoint of $\gamma_2$, and let $\gamma_3$ be a SAW in $y+e^{4\pi i/3}D(\textup{Tria}_{2r})$, where $r$, a half-integer, is the distance of $y$ from the lowest-most point of the right side of $\textup{Trap}_{2i+1,x}$. Concatenating $\gamma_1$, $\gamma_2$ and $\gamma_3$ we obtain a walk $\gamma$ contributing to $\sum_{k=T}^{21T} G_k$. For the upper limit of this sum, we used that the endpoint of $\gamma$ has distance at most $2T+\ell+2r$ from $0$, where $\ell$ is the length of the bottom side of $\textup{Trap}_{2i+1,x}$, and also that $\ell\geq i-4T$, and $r\leq 2i-\ell$. See Figure~\ref{fig: above line}. 

\begin{figure}
    \centering
    \includegraphics[width=.5\linewidth]{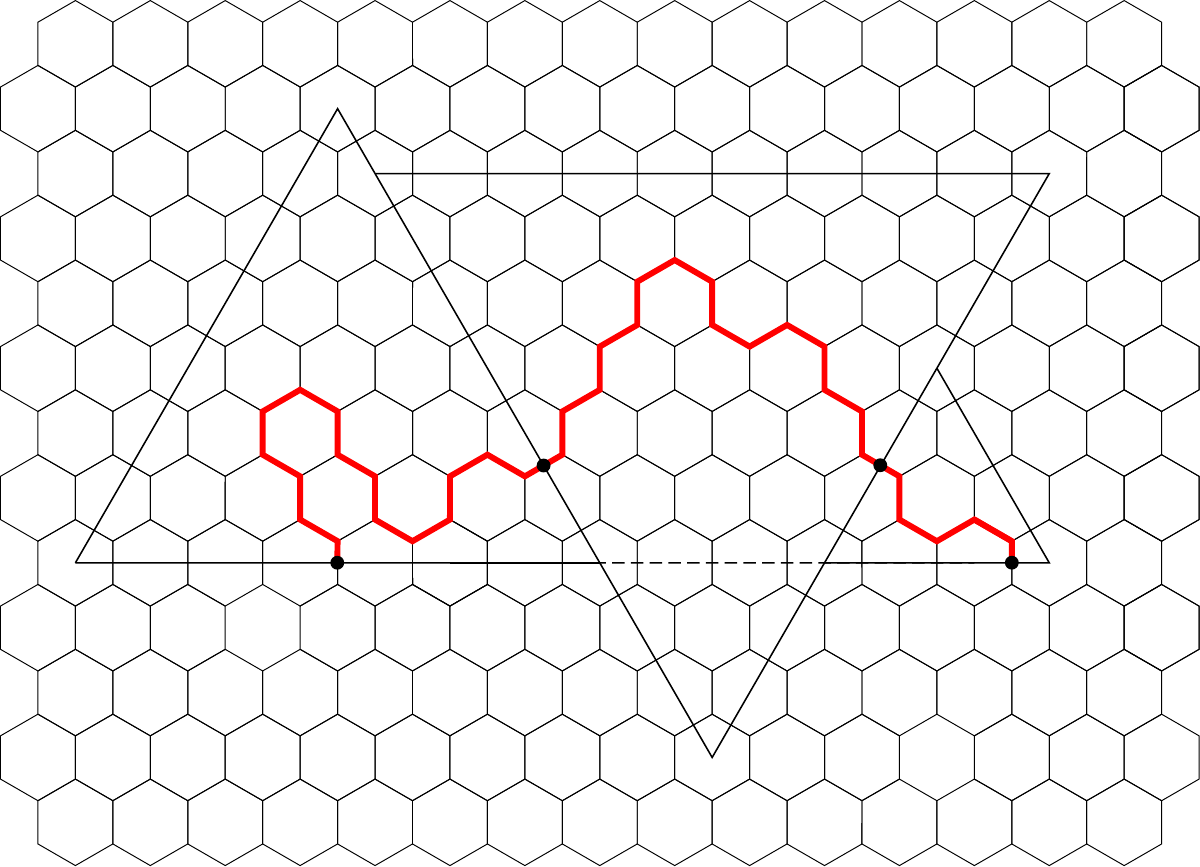}
     \caption{The construction in the case~\ref{case: above line}.}
    \label{fig: above line}
\end{figure}

%$2T+\ell+2r\leq 2T+\ell+2(2i-\ell)\leq 2T+4i-\ell \leq 2T+4i-(i-4T) = 6T+3i = 21T$

We now estimate the partition function of the constructed walks. Note that 
\[
\sum_{k=T}^{2T-1}\sum_{x\in \textup{R}_{2k+1}\setminus\Sigma_{2k+1}} D^\textup{ren}_{2k+1}(x)\geq \sum_{k=T}^{2T-1}\sum_{x\in \textup{R}_{2k+1}\setminus \Sigma_{2k+1}} D_{2k+1}(x)-\sum_{k=T}^{2T} D_{2k+1}/8
\]
by Markov's inequality and Lemma \ref{lem: expecation}. Hence using our assumption we obtain 
\[
\sum_{k=T}^{2T-1}\sum_{x\in \textup{R}_{2k+1}\setminus \Sigma_{2k+1}} D^\textup{ren}_{2k+1}(x)\geq \sum_{k=T}^{2T-1} D_{2k+1}/8.
\]  
Moreover, using Markov's inequality, Lemma \ref{lem: expecation} and \eqref{eq: non i_x} we obtain
\[
F^{R,\textup{ren}}_{2i+1,x}\geq F^R_{2i+1,x}-\frac{1}{8}D_{2i+1}=\frac{1}{2}D_{2i+1}-D^-_{2i+1,x}-\frac{1}{8}D_{2i+1}\geq \frac{1}{8}D_{2i+1}
\]
for every $i\in \textup{R}_{2k+1}\setminus \Sigma_{2k+1}$.
This implies that the partition function of the constructed walks satisfies
\[
\sum_{k=T}^{2T-1}\sum_{x\in \textup{R}_{2k+1}\setminus \Sigma_{2k+1}} D^\textup{ren}_{2k+1}(x)\sum_{i=4T}^{5T-1} F^{R,\textup{ren}}_{2i+1,x}\frac{D_{2r}}{2}\geq \frac{T^2}{128} D^3_{18T},
\]
where we used that $r<9T$.

To establish an upper bound for the left-hand side of the above inequality, we need to estimate the number of pre-images for each SAW $\gamma$ constructed. Arguing as in the proof of Lemma~\ref{lem: first case} we see that there are at most 
\[
\max_{k\in [T, 2T-1]} M_k\leq 16 \cos\left(\frac{\pi}{8}\right)\frac{D_{\lceil \frac{T+1}{2} \rceil}}{D_{4T-1}}\sum_{i=0}^T D_i \leq 16 \cos\left(\frac{\pi}{8}\right)\frac{D_{\lceil \frac{T+1}{2} \rceil}}{D_{10T}}\sum_{i=0}^{\lfloor \frac{5T}{2} \rfloor} D_i
\]
choices for $\gamma_1$, and at most
\[
\max_{i\in [4T, 5T-1]} M_i\leq 16 \cos\left(\frac{\pi}{8}\right)\frac{D_{\lceil \frac{4T+1}{2} \rceil}}{D_{10T-1}}\sum_{i=0}^{\lfloor \frac{5T}{2} \rfloor} D_i \leq 16 \cos\left(\frac{\pi}{8}\right)\frac{D_{\lceil \frac{T+1}{2} \rceil}}{D_{10T}}\sum_{i=0}^{\lfloor \frac{5T}{2} \rfloor} D_i
\]
choices for $\gamma_2$. Therefore, combining the above inequalities we obtain that
\[
T^2 D^3_{18T}\leq 2^{15}\left(\frac{D_{\lceil \frac{T+1}{2} \rceil}}{D_{10T}}\sum_{i=0}^{\lfloor \frac{5T}{2} \rfloor} D_i\right)^2 \sum_{k=T}^{21T} G_k,
\]
as desired. 
\end{proof}

We now obtain the following result which holds for any value of $T$, regardless of whether case \ref{case: below line} or \ref{case: above line} happens.

\begin{corollary}\label{cor: final ineq}
For every $T\geq 1$ one has
\begin{equation}\label{eq:unified-bound}
T^4D_{18T}^5\leq 2^{17} \left(\sum_{i=0}^{3T} D_i\right)^4\cdot \sum_{k=T}^{21T}G_k.    
\end{equation}
\end{corollary}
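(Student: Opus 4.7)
The plan is to show that whichever of the dichotomous cases \ref{case: below line} and \ref{case: above line} occurs, the corresponding inequality from Lemma~\ref{lem: first case} or Lemma~\ref{lem: second case} already implies the unified bound \eqref{eq:unified-bound}; the corollary then follows immediately. The overall strategy is to multiply each case's inequality by a suitable power of $T\,D_{18T}$, so that both sides land in the common form $T^4 D_{18T}^5\leq C\bigl(\sum_{i=0}^{3T}D_i\bigr)^4\sum_{k=T}^{21T}G_k$.

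The key elementary inputs, all coming from the monotonicity of $(D_k)$ provided by Lemma~\ref{lem: B-D ineq}, are the two bounds
\[
T\,D_{\lceil (T+1)/2\rceil}\leq 2\sum_{i=0}^{3T}D_i\qquad\text{and}\qquad T\,D_{18T}\leq \tfrac{1}{3}\sum_{i=0}^{3T}D_i,
\]
together with comparisons such as $D_{18T}\leq D_{10T}\leq D_{4T}$ and $\sum_{\ell=T}^{7T}G_\ell\leq \sum_{\ell=T}^{21T}G_\ell$. Both displayed inequalities follow by summing $D_i\geq D_j$ for $i\leq j$ over appropriate ranges.

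In case \ref{case: above line}, I would multiply both sides of \eqref{eq:inequality-case-b} by $T^2 D_{18T}^2$: the left-hand side becomes $T^4 D_{18T}^5$, and on the right the factor $D_{18T}^2/D_{10T}^2$ is at most $1$, while $T^2 D_{\lceil (T+1)/2\rceil}^2\leq 4\bigl(\sum_{i=0}^{3T}D_i\bigr)^2$ supplies two powers of $\sum_{i=0}^{3T}D_i$; the remaining two powers come directly from the pre-existing $(\sum_{i=0}^{\lfloor 5T/2\rfloor}D_i)^2$ after bounding $\sum_{i=0}^{\lfloor 5T/2\rfloor}D_i\leq \sum_{i=0}^{3T}D_i$. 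The constants match exactly: $2^{15}\cdot 4=2^{17}$.

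In case \ref{case: below line}, I would instead multiply \eqref{eq:inequality-case-a} by $T^3 D_{18T}^3$. Using $D_{18T}\leq D_{10T}$ on the left yields $T^4 D_{18T}^5$, and on the right $D_{18T}^3/D_{4T}\leq D_{18T}^2$ lets me invoke $(T^2 D_{18T}^2)(T\,D_{\lceil(T+1)/2\rceil})\leq \tfrac{2}{9}\bigl(\sum_{i=0}^{3T}D_i\bigr)^3$ via the two elementary bounds, with the fourth power of $\sum_{i=0}^{3T}D_i$ coming from $\sum_{i=0}^{T}D_i\leq \sum_{i=0}^{3T}D_i$. The resulting constant is $256\sqrt{2}\cdot \tfrac{2}{9}<2^{17}$, so the bound holds. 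The only mildly delicate point is picking the correct multiplier---$T^2 D_{18T}^2$ in case \ref{case: above line} versus $T^3 D_{18T}^3$ in case \ref{case: below line}---so that the one-power deficit in $D$ between Lemmas~\ref{lem: first case} and \ref{lem: second case} is absorbed uniformly; otherwise the argument is essentially a bookkeeping exercise and I foresee no real obstacle.
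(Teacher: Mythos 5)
Your proposal is correct, and it is essentially the paper's argument: the same case dichotomy, the same input inequalities from Lemmas~\ref{lem: first case} and \ref{lem: second case}, and the same monotonicity bounds $T D_{\lceil (T+1)/2\rceil}\leq 2\sum_{i=0}^{3T}D_i$ and $T D_{18T}\leq \sum_{i=0}^{3T}D_i$ (the paper merely routes case~\ref{case: below line} through the intermediate claim that \eqref{eq:inequality-case-b} holds unconditionally before applying the common final step, whereas you go directly to \eqref{eq:unified-bound} in each case). Your constants check out in both cases ($2^{15}\cdot 4=2^{17}$ and $256\sqrt{2}\cdot\tfrac{2}{9}<2^{17}$).
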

\begin{proof}
We first observe that the bound \eqref{eq:inequality-case-b} is satisfied for all $T\geq 1$. Indeed, let $T\geq 1$. If case \ref{case: above line} happens then this follows directly from Lemma \ref{lem: second case}. Otherwise, if case \ref{case: below line} happens, then we deduce \eqref{eq:inequality-case-b} from \eqref{eq:inequality-case-a} by observing that by the monotonicity of $(D_k)$
\[
\frac{D_{\lceil \frac{T+1}{2} \rceil}}{D_{10T}}\sum_{i=0}^{\lfloor \frac{5T}{2} \rfloor} D_i \geq \sum_{i=0}^{\lfloor \frac{5T}{2} \rfloor} D_i \geq T D_{18T}, 
\]
hence
\begin{equation*}
\begin{aligned}
T^2 D_{18T}^3 &\leq T^2 D_{10T}^2 D_{18T}\leq 256\sqrt{2}\cdot T D_{18T}\frac{D_{\lceil \frac{T+1}{2} \rceil}}{D_{4T}}\sum_{i=0}^T D_i \sum_{k=T}^{7T} G_k\\
&\leq 2^{15}T D_{18T}\frac{D_{\lceil \frac{T+1}{2} \rceil}}{D_{10T}}\sum_{i=0}^{\lfloor \frac{5T}{2} \rfloor} D_i \sum_{k=T}^{21T}G_k\\
&\leq 2^{15}\left(\frac{D_{\lceil \frac{T+1}{2} \rceil}}{D_{10T}}\sum_{i=0}^{\lfloor \frac{5T}{2} \rfloor} D_i \right)^2 \sum_{k=T}^{21T}G_k
\end{aligned}
\end{equation*}

To deduce \eqref{eq:unified-bound} from \eqref{eq:inequality-case-b}  note that by the monotonicity of $(D_k)$
\[
T^2D_{18T}^2\cdot \left(\frac{D_{\lceil \frac{T+1}{2} \rceil}}{D_{10T}}\right)^{2}\leq T^2D_{\lceil \frac{T+1}{2} \rceil}^2 \leq
\left(\frac{T}{\lceil \frac{T+1}{2} \rceil}\sum_{i=0}^{\lceil\frac{T+1}{2}\rceil} D_i\right)^2\leq 4 \left(\sum_{i=0}^{3T} D_i\right)^2.
\]
Hence, combining with \eqref{eq:inequality-case-b} we obtain
\[
T^4 D_{18T}^5 \leq T^2D_{18T}^2\cdot 2^{15}\left(\frac{D_{\lceil \frac{T+1}{2} \rceil}}{D_{10T}}\right)^{2} \left(\sum_{i=0}^{\lfloor \frac{5T}{2}\rfloor} D_i \right)^2 \sum_{k=T}^{21T}G_k \leq 2^{17}\left(\sum_{i=0}^{3T} D_i \right)^4 \sum_{k=T}^{21T}G_k,
\]
as desired.
\end{proof}

\subsection{Proof of polynomial decay of $B_T$}

In this subsection, we prove the polynomial decay of $B_T$ and $D_T$.
We first state and prove the next purely analytical lemma. The constants have not been optimized. We let $C=1/\cos(\pi/8)$.
\begin{lemma}\label{lem: purely analytical}
Let $(D_k)_{k\geq 0}$ be a non-increasing sequence such that $D_k\in (0, C]$, and let $(G_k)_{k\geq 1}$ be such that $G_k>0$. Assume that for all $T\geq 1$
\begin{equation}\label{eq:inductive-inequality}
T^4D_{18T}^5\leq 2^{17} \left(\sum_{i=0}^{T} D_i\right)^4\cdot \sum_{k=T}^{21T}G_k,
\end{equation}
and also that $\sum_{k\geq T} G_k \leq 2D_{2T-1}$. Then for $\eps=10^{-10}$ one has $D_T\leq 100\cdot T^{-\eps}$ for all $T\geq 1$. 
\end{lemma}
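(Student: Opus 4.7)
The strategy is to show that $D$ halves each time the scale is multiplied by a fixed constant $R$, which yields polynomial decay of the form $D_T \leq 2C T^{-\log 2/\log R}$ by monotonicity. I will pick $R$ of order $21^{2^{28}}$, for which $\alpha := \log 2/\log R > 10^{-10} = \varepsilon$ by a direct estimate ($\log R \leq 2^{28}\log 21 + \log 144 < 2^{31}$).

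I argue by induction on $m$: there exists an increasing sequence $1 = T^{(0)} < T^{(1)} < \dots$ with $T^{(m+1)} \leq R\,T^{(m)}$ such that $D_T \leq \delta_m := C/2^m$ for all $T \geq T^{(m)}$. The base case $m = 0$ is the given bound $D_T \leq C$. For the inductive step, by splitting $\sum_{i=0}^T D_i$ at the scales $T^{(j)}$ and using $D_i \leq \delta_j$ on each block $[T^{(j)}, T^{(j+1)})$, the geometric growth of $(T^{(j)})$ (with ratio $R \gg 2$) makes the last block dominate, yielding
\[
\sum_{i=0}^T D_i \leq 2T\delta_m \qquad \text{for all } T \geq 8T^{(m)}.
\]
Setting $T_0 := 8T^{(m)}$ and $T_n := 21^n T_0$, plugging the above into the hypothesized inequality at $T = T_n$ and summing over $n \geq 0$, while using that the intervals $[T_n, 21T_n]$ cover $[T_0, \infty)$ with multiplicity at most $2$ together with $\sum_{k \geq T_0} G_k \leq 2D_{2T_0 - 1} \leq 2\delta_m$, gives
\[
\sum_{n \geq 0} D_{18 \cdot 21^n T_0}^5 \leq 2^{23}\,\delta_m^5.
\]
Since $(D_{18 \cdot 21^n T_0})_{n \geq 0}$ is non-increasing, $(N+1) D_{18 \cdot 21^N T_0}^5 \leq 2^{23}\delta_m^5$. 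Choosing $N+1 = 2^{28}$ gives $D_{18 \cdot 21^N T_0} \leq \delta_m/2 = \delta_{m+1}$, and by monotonicity this extends to all $T \geq T^{(m+1)} := 18 \cdot 21^{2^{28}-1} \cdot 8 T^{(m)} = R\,T^{(m)}$ with $R := 144 \cdot 21^{2^{28}-1}$.

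With the induction established, for any $T \geq 1$ I pick $m$ with $T^{(m)} \leq T < T^{(m+1)} = R^{m+1}$, so that $2^m > T^\alpha/2$ and thus $D_T \leq C/2^m < 2C T^{-\alpha} \leq 2C T^{-\varepsilon} \leq 100\,T^{-\varepsilon}$ since $\alpha \geq \varepsilon$ and $2C = 2/\cos(\pi/8) < 3$.

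The main difficulty lies in obtaining a ratio $R$ independent of $m$. This relies crucially on the refined bound $\sum_{i=0}^T D_i \leq 2T\delta_m$; with only the trivial bound $\sum_{i=0}^T D_i \leq TC$ the number $N$ of halving steps would depend on $\delta_m$ as $1/\delta_m^4$, forcing the geometric ratio to grow with $m$ and yielding only the logarithmic decay of Glazman and Manolescu that the present lemma improves upon.
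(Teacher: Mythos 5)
Your proof is correct and follows essentially the same strategy as the paper: an induction along a geometric sequence of scales in which the summability hypothesis $\sum_{k\geq T}G_k\leq 2D_{2T-1}$ is spread over $\sim 2^{28}$ geometric windows $[21^nT_0,21^{n+1}T_0]$ so that, by monotonicity of $(D_k)$, the main inequality forces a constant-factor drop of $D$ after a bounded multiplicative increase of the scale. The only differences are cosmetic: you track a halving sequence $\delta_m=C/2^m$ and bound $\sum_{i\leq T}D_i\leq 2T\delta_m$ by a block decomposition, whereas the paper carries the polynomial bound $D_i\leq 100\, i^{-\eps}$ directly through the induction and bounds the sum by an integral comparison.
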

\begin{proof}
We will show that $D_T\leq 22^{-1/10}\cdot 100\cdot T^{-\eps}$ for all $T$ of the form $22^{10^9\cdot s}$ with $s\geq 0$. This implies the desired result, since for $22^{10^9\cdot s}\leq k<22^{10^9\cdot (s+1)}$ we then have
\begin{equation}\label{eq: c to 2c}
D_k\leq D_T\leq 22^{-1/10}\cdot 100\cdot T^{-\varepsilon}= 100 \cdot 22^{-\varepsilon \cdot 10^9\cdot (s+1)}\leq 100\cdot k^{-\varepsilon},
\end{equation}
where we used that $(D_k)$ is non-increasing.

For $s=0$ there is nothing to prove, so let us assume that the desired inequality holds for some $S-1\geq 0$, and we will show it for $S$.

For $j\geq 0$, let $T_j:=22^{10^9(S-1)+j}$. Since $\sum_{k\geq T_0} G_k \leq 2D_{2T_0-1}\leq 2D_{T_0}$, for at least one $j_0\in\{0,1,\ldots,10^9-1\}$ we have 
\[
\sum_{k=T_{j_0}}^{21T_{j_0}}G_k \leq 10^{-9}\cdot 2D_{T_0}.
\]
Using the monotonicity of $(D_k)$, the inequality \eqref{eq:inductive-inequality} with $T=T_{j_0}$ together with the above bound, and finally the induction hypothesis, we deduce that for $K:=T_{10^9}$ we have
\[
D_{K}^5\leq D_{18 T_{j_0}}^5\leq 2^{18}\cdot 10^{-9}\cdot \left(T_{j_0}^{-1}\sum_{i=0}^{3T_{j_0}} D_i\right)^4\cdot D_{T_0}\leq 2^{17}\cdot 10^{-7}\cdot \left(T_{j_0}^{-1}\sum_{i=0}^{3T_{j_0}} D_i\right)^4\cdot T_0^{-\eps}.
\]
Since $D_i\leq 100 i^{-\eps}$ for all $1\leq i\leq K$ by our induction hypothesis and \eqref{eq: c to 2c}, 
we have that 
\[
\sum_{i=0}^{3T_{j_0}} D_i \leq C+100\sum_{i=1}^{3T_{j_0}} i^{-\eps} \leq (100+ C) \sum_{i=1}^{3T_{j_0}} i^{-\eps}\leq 102\int_0^{3T_{j_0}} x^{-\eps} \mathrm{d}x= 102 \frac{(3T_{j_0})^{1-\eps}}{1-\eps}. 
\]
Since $T_{j_0}\geq T_0$ we obtain
\[
D_K^5\leq 2^{17}\cdot 10^{-7}\cdot \left(102 \cdot T_{j_0}^{-1}\cdot \frac{(3T_{j_0})^{1-\eps}}{1-\eps}\right)^4\cdot T_0^{-\eps} \leq 2^{17}\cdot 10^{-7}\cdot \left(\frac{306}{1-\eps}\right)^4 T_0^{-5\eps}\leq 4\cdot 100^4 \cdot T_0^{-5\eps},
\]  
where we used that $2^{17}\cdot 10^{-7}\leq 4\cdot 3^4 \cdot 10^{-4}$ and $\frac{306}{1-\eps}\leq \frac{1000}{3}$.
Since $(K/T_0)^{5\eps}=\sqrt{22}$ and $4\cdot \sqrt{22} \cdot 100^4\leq \left(22^{-1/10}\cdot 100\right)^5$, 
it follows that 
\[
D_K^5\leq \left(22^{-1/10}\cdot 100\right)^5 K^{-5\varepsilon},
\]
from which the induction step follows. This completes the proof.
\end{proof}

As a corollary we obtain the desired polynomial decay of $B_T$ and $D_T$. %In what follows, we let $\varepsilon=10^{-10}$.

\begin{theorem}\label{thm: D_T polynomial decay}
Let $\eps:=10^{-10}$. For every $T\geq 1$ we have
\[
D_T\leq 100\cdot  T^{-\varepsilon}.
\]
\end{theorem}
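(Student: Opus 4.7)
My proof will simply verify the three hypotheses of Lemma \ref{lem: purely analytical} for the actual sequences $(D_k)_{k\geq 0}$ and $(G_k)_{k\geq 1}$ defined in Section~2; once the hypotheses are in place, the conclusion $D_T\leq 100\cdot T^{-\varepsilon}$ follows immediately by invoking the lemma.

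First I would check the structural hypothesis that $(D_k)$ is non-increasing with $D_k\in(0,C]$, where $C=1/\cos(\pi/8)$. Monotonicity is precisely Lemma \ref{lem: B-D ineq}. The initial value $D_0=1/\cos(\pi/8)=C$ is the definition, so together with monotonicity this gives $D_k\leq C$ for all $k\geq 0$. Positivity $D_k>0$ is clear since there are always triangle walks of very short length contributing a strictly positive weight to the partition function.

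Next I would verify the tail bound $\sum_{k\geq T}G_k\leq 2D_{2T-1}$. This is immediate from the second inequality of Lemma \ref{lem: G sum}, which gives the tighter estimate $\sum_{k=T}^{\infty}G_k\leq \tfrac{\cos(\pi/8)}{2\cos(3\pi/8)}D_{2T-1}$; a direct numerical check shows that $\tfrac{\cos(\pi/8)}{2\cos(3\pi/8)}\approx 1.21<2$. Finally, the inductive inequality required by Lemma~\ref{lem: purely analytical} is the content of Corollary \ref{cor: final ineq} (where the outer sum ranges up to $3T$, matching what is actually used inside the proof of the analytic lemma).

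With all three hypotheses in hand, Lemma \ref{lem: purely analytical} applied with $\varepsilon=10^{-10}$ yields the desired bound. There is no real obstacle at this stage: all the conceptual work has already been carried out, namely the geometric triangle-concatenation construction with renewal-time control that produced Corollary \ref{cor: final ineq}, and the analytic iteration along the geometric subsequence $22^{10^9 s}$ encapsulated in Lemma \ref{lem: purely analytical}. This final step merely packages those two ingredients together.
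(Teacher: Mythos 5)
Your proposal is correct and is essentially identical to the paper's own proof, which likewise just checks the hypotheses of Lemma \ref{lem: purely analytical} via Lemma \ref{lem: G sum}, Lemma \ref{lem: B-D ineq}, and Corollary \ref{cor: final ineq}. Your parenthetical remark about the $\sum_{i=0}^{3T}D_i$ versus $\sum_{i=0}^{T}D_i$ discrepancy between the corollary and the lemma's stated hypothesis is a fair catch of a small inconsistency in the paper; as you note, the proof of the analytic lemma in fact uses the $3T$ range, so the argument goes through.
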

\begin{proof}
Note that we have $\sum_{k\geq T} G_k \leq 2D_{2T-1}$ as a consequence of Lemma~\ref{lem: G sum}. This together with Lemma~\ref{lem: B-D ineq} and Corollary~\ref{cor: final ineq} implies that the assumptions of Lemma~\ref{lem: purely analytical} are satisfied and the result follows.
%.   Then the result follows from Lemma~\ref{lem: purely analytical}, noting that the assumptions of the lemma are satisfied by Lemmas~\ref{lem: G sum} and \ref{lem: B-D ineq}, and Corollary~\ref{cor: final ineq}.  
\end{proof}

\begin{proof}[Proof of Theorem~\ref{thm: polynomial decay}] Since we have $B_T\leq D_T$ by Lemma~\ref{lem: B-D ineq}, this follows immediately from Theorem~\ref{thm: D_T polynomial decay}.
\end{proof}

\section{Quantitative sub-ballisticity}  

In this section, we obtain a quantitative bound on the displacement of a typical self-avoiding walk.

We first need the following lemma. In what follows, we set $\varepsilon:=10^{-10}$.

\begin{lemma}\label{lem: long bridges}
There exists a constant $c>0$ such that
\[
\sum_{k\geq 5\cdot \eps^{-1}\cdot\tfrac{n}{\log(n+1)}} \sum_{\substack{\gamma\in B(\textup{Strip}_k):\\ \ell(\gamma)=n}} x_c^n\leq e^{-cn^{2/3}} 
\]
for every $n\geq 1$.
\end{lemma}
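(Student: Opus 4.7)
The plan is to apply the polynomial decay $B_T\le 100\,T^{-\eps}$ from Theorem~\ref{thm: polynomial decay} multiplicatively, by decomposing every bridge of height at least $T := 5\eps^{-1}n/\log(n+1)$ into many strict sub-bridges of an intermediate height $T_0 := \lceil n^{1/3}\rceil$. Setting $m := \lfloor T/T_0\rfloor$, we have $m\log T_0 \asymp \tfrac{1}{3} n^{2/3}$, so the product $(B_{T_0})^m \le (100\,T_0^{-\eps})^m = \exp(-\Theta(\eps n^{2/3}))$ provides the target decay factor. The role of the length constraint $\ell(\gamma)=n$ is to limit the combinatorial cost of the decomposition.

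Concretely, for each $\gamma \in B(\textup{Strip}_k)$ with $k\ge T\ge mT_0$, I would define $\sigma_j$ to be the last time at which $\gamma$ has $y$-coordinate at most $\tfrac{\sqrt 3}{2}jT_0$, and $\tau_j$ to be the first time after $\sigma_{j-1}$ at which $\gamma$ reaches $y$-coordinate $\tfrac{\sqrt 3}{2}jT_0$ or above. Then on the interval $[\sigma_{j-1},\tau_j]$ the walk lies strictly between the lines $y=\tfrac{\sqrt 3}{2}(j-1)T_0$ and $y=\tfrac{\sqrt 3}{2}jT_0$ in its interior, making $\gamma|_{[\sigma_{j-1},\tau_j]}$ (up to translation) a strict bridge of height $T_0$ confined to a horizontal slab. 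Summing the partition-function contributions of the $m$ strict sub-bridges over all admissible endpoints---using translation invariance of the strip---bounds their combined contribution by $\tilde B_{T_0}^m \le B_{T_0}^m \le (100\,T_0^{-\eps})^m$, producing the desired exponential decay.

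The main technical obstacle is controlling the leftover pieces of $\gamma$---namely the initial loop $\gamma|_{[0,\sigma_0]}$, the excursions $\gamma|_{[\tau_j,\sigma_j]}$ around each horizontal line, and the tail $\gamma|_{[\sigma_m,n]}$ above the topmost line---subject to the length constraint $\ell(\gamma)=n$. Their combined length is at most $n$, but a naive Hammersley--Welsh bound $c_N x_c^N\le e^{C\sqrt N}$ applied to each piece, combined with Cauchy--Schwarz, yields only $\exp(O(\sqrt{mn}))=\exp(O(n^{5/6}))$, which would overwhelm the main decay. A sharper estimate is therefore needed: for instance, exploiting that the leftover pieces live inside $\textup{Strip}_k$ (so their effective connective constant is strictly less than $\mu$), or performing a Hammersley--Welsh-style decomposition at bridge-renewal times into pieces with distinct heights so that the combinatorial overhead collapses to $\exp(O(\sqrt n))$, which is negligible compared to $n^{2/3}$. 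Either route, combined with the main decay factor, delivers the lemma with some $c=c(\eps)>0$. For the regime of small $n$, where the threshold $5\eps^{-1}n/\log(n+1)$ exceeds the maximum possible height of a length-$n$ SAW on $\mathbb{H}$, the sum is empty and the bound is trivially satisfied.
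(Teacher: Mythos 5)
Your skeleton matches the paper's: chop a tall bridge into roughly $n^{2/3}$ sub-bridges of height $\asymp n^{1/3}$, apply $B_{T_0}^m\le(100\,T_0^{-\eps})^m=\exp(-\Theta(n^{2/3}))$, and then control the leftover excursions. But the step you yourself flag as ``the main technical obstacle'' is the actual content of the lemma, and neither of the two routes you sketch resolves it. The first (an effective connective constant strictly below $\mu$ inside $\textup{Strip}_k$) is essentially circular: quantifying that gap for a strip of width $\sim n/\log n$ is at least as hard as the decay of $B_k$ itself, and the excursions in any case live near individual cutting lines, not in the full strip. The second (a Hammersley--Welsh decomposition collapsing the overhead to $\exp(O(\sqrt n))$) is not developed, and it aims at something that is in fact false for this construction: the leftover cost is genuinely $\exp(\Theta(n^{2/3}))$, not $\exp(o(n^{2/3}))$.

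The paper closes the gap with two specific ingredients you are missing. First, instead of a single family of cutting lines at heights $\tfrac{\sqrt3}{2}jm$, it considers all $m$ translates $\ell_{i,j}=\{y=\tfrac{\sqrt3}{2}(jm+i)\}$, $0\le i<m$; since these families are disjoint and the walk has length $n$, pigeonhole produces an offset $i_0$ for which the total number of visits of $\gamma$ to $\bigcup_j\ell_{i_0,j}$ is at most $(n+1)/m\approx n^{2/3}$. Second, each excursion between consecutive visits to a line $\ell_{i_0,j}$ stays on one side of it and is therefore a $U$-walk, and the \emph{total} partition function of $U$-walks from a fixed boundary mid-edge is the constant $C=2\sum_{k\ge1}G_k<3$ by the first part of Lemma~\ref{lem: G sum} --- a parafermionic-observable input with no naive combinatorial substitute. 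This bounds the leftover contribution by $(C+1)^{(n+1)/m}=\exp\bigl((\log4+o(1))n^{2/3}\bigr)$, which must then be beaten by the main decay $\exp\bigl(-(\tfrac53+o(1))n^{2/3}\bigr)$; the numerical margin $\tfrac53>\log4$ is exactly why the threshold carries the constant $5\eps^{-1}$. Without the pigeonhole over offsets (to count the attachment points) and the summability of $G_k$ (to weigh each excursion by $O(1)$), your argument does not go through.
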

\begin{proof}
Consider a bridge $\gamma\in B(\textup{Strip}_k)$ for some $k\geq w:= 5\cdot \eps^{-1}\cdot\tfrac{n}{\log(n+1)}$ that has length $n$, and let $m:= \lfloor n^{1/3} \rfloor$. We first decompose $\gamma$ into a collection of bridges, with most of them having height $\frac{\sqrt{3}}{2}m$, and walks that connect consecutive bridges. 

To this end, for each $i=0,1,\ldots,m-1$ and $j=0,1,\ldots,s:=\lfloor w/m \rfloor-1$, we consider the horizontal lines $\ell_{i,j}=\{y=\frac{\sqrt{3}}{2}jm+i\}$. Let $t^{+}_{i,j}=\max\{t=0,\ldots,\ell(\gamma): \gamma_t\in \ell_{i,j}\}$ and $t^{-}_{i,j}=\min\{t=t^{+}_{i,j-1},\ldots,\ell(\gamma): \gamma_t\in \ell_{i,j}\}$, where we set for convenience $t^+_{i,-1}=0$. Note that the subwalks $\alpha(i):=(\gamma_0,\ldots,\gamma_{t^-_{i,0}})$, $b(i,j):=(\gamma_{t^{+}_{i,j}},\ldots,\gamma_{t^{-}_{i,j+1}})$, and $\tau(i):=(\gamma_{t^+_{i,s}},\ldots,\gamma_n)$  are all (up to translation) bridges; whilst the subwalks $c(i,j):=(\gamma_{t^{-}_{i,j}},\ldots,\gamma_{t^{+}_{i,j}})$ of $\gamma$ start and end on $\ell_{i,j}$. Write $I(i,j)$ for the number of times that $c(i,j)$ lies on $\ell_{i,j}$. By our choice of the range of $i$, the lines $\ell_{i,j}$ are disjoint, hence for some $i_0$ we have
\[
\sum_{j=0}^s I(i_0,j)\leq \frac{n+1}{m}.
\]

With this decomposition in hands, we claim that 
\begin{equation}\label{eq: length upper bound}
\sum_{k\geq 5\cdot \eps^{-1}\cdot\tfrac{n}{\log(n+1)}} \sum_{\substack{\gamma\in B(\textup{Strip}_k):\\ \ell(\gamma)=n}} x_c^n\leq \left(\sum_{i=1}^n B_i\right)^2 B_m^s (C+1)^{\frac{n+1}{m}},   
\end{equation}
where $C=2\sum_{k\geq 1} G_k<3$.
Indeed, the term $\left(\sum_{i=1}^n B_i\right)^2$ accounts for the possibilities for $\alpha(i_0)$ and $\tau(i_0)$. The term $B_m^s$
accounts for the $s$ bridges of height $\frac{\sqrt{3}}{2}m$. Finally, for the last term, observe that each $c(i_0,j)$ is the concatenation of (translations and rotations of) $U$-walks. Note that each time $c(i_0,j)$ visits the line $\ell_{i_0,j}$, there are two possibilities: either we attach a $U$-walk of positive length, which contributes at most $C$, or we do not attach any walk, and either we move on the next line $\ell_{i_0,j+1}$ (if such a line exists) or we stop, and this choice contributes at most $1$. Since the number of intersections with the lines $\ell_{i_0,j}$ is at most $\frac{n+1}{m}$, we can conclude that \eqref{eq: length upper bound} holds.

By Theorem~\ref{thm: polynomial decay}
\begin{equation*}
\begin{aligned}
B_m^s \leq (100\cdot m^{-\eps})^s = \exp\left(\eps\log{m}\cdot w/m + O(s)\right) \leq \exp\left(-\left(\frac{5}{3}+o(1)\right)n^{2/3}\right). 
\end{aligned}
\end{equation*}
Since 
\[
(C+1)^{\frac{n+1}{m}}\leq e^{(1+o(1))\log 4 \cdot n^{2/3}}
\]
and $\log 4<5/3$, it follows that the right-hand side of \eqref{eq: length upper bound} decays exponentially in $n^{2/3}$. To handle the case of small $n$ we note that 
\[
\sum_{k\geq 5\cdot \eps^{-1}\cdot\tfrac{n}{\log(n+1)}} \sum_{\substack{\gamma\in B(\textup{Strip}_k):\\ \ell(\gamma)=n}} x_c^n< b_n x_c^n\leq 1,
\]
where $b_n$ is the number of bridges of length $n$, and in the last inequality we used the standard inequality $b_n\leq \mu^n$.
This completes the proof.
\end{proof}

We are now ready to prove Theorem~\ref{thm: quantitative sub-ballisticity}.

\begin{proof}[Proof of Theorem~\ref{thm: quantitative sub-ballisticity}]
Let $x=(1/2,0)$ be the center of the hexagon on the right of $0$. Consider a SAW $\gamma$ of length $n$. We first project $\gamma-x$ along the lines $i\mathbb{R}$ and $e^{\pi i/6}\mathbb{R}$. Let $\textup{p}_{\theta}$ denote the projection along the vector $e^{i\theta}$, and note that either
\[
\max\{|\textup{p}_{\pi/2}(\gamma_k-x)|: 0\leq k\leq n\}\geq \cos\left(\frac{\pi}{6}\right)  \max\{\lVert \gamma_k -x \rVert : 0\leq k\leq n\}
\]
or
\[
\max\{|\textup{p}_{\pi/6}(\gamma_k-x)|: 0\leq k\leq n\}\geq \cos\left(\frac{\pi}{6}\right)  \max\{\lVert \gamma_k -x \rVert : 0\leq k\leq n\}.
\]
Moreover,
\[
\max\{\lVert \gamma_k -x \rVert : 0\leq k\leq n\}\geq \max\{\lVert \gamma_k \rVert : 0\leq k\leq n\} -1/2.
\]
Letting $a_n=5\cdot \eps^{-1}\cdot\tfrac{n}{\log(n+1)}$, by the $\pi/6$ rotational symmetry of the hexagonal lattice we have
\begin{equation*}
\begin{aligned}
\mathbb{P}_n[\max\{\lVert \gamma_k \rVert : 0\leq k\leq n\}\geq 2a_n]&\leq \mathbb{P}_n[\max\{\lVert \gamma_k-x \rVert : 0\leq k\leq n\}\geq 2a_n-1/2]    \\
&\leq 4\mathbb{P}_n\left[\max\{\textup{p}_{\pi/2}(\gamma_k): 0\leq k\leq n\}\geq \frac{\sqrt{3}}{2}\left(2a_n-1/2\right)\right] \\
&\leq 4\mathbb{P}_n\left[\max\{\textup{p}_{\pi/2}(\gamma_k): 0\leq k\leq n\}\geq \frac{\sqrt{3}}{2}a_n\right]
\end{aligned}
\end{equation*}
for every $n\geq 1$.

The Hammersley-Welsh unfolding operation that turns walks into bridges,
may be applied to walks satisfying 
$\max \{\textup{p}_{\pi/2}(\gamma_k): 0 \leq k \leq n\}\geq a_n$. 
Since $\max \{\textup{p}_{\pi/2}(\gamma_k)\}$
increases at each step
of the unfolding operation, and the resulting walk is a bridge, we can conclude that
\begin{equation*}
\begin{aligned}
\mathbb{P}_n\left[\max\{\textup{p}_{\pi/2}(\gamma_k): 0\leq k\leq n\}\geq \frac{\sqrt{3}}{2} a_n\right]&\leq \frac{1}{c_n}e^{C\sqrt{n}}x_c^{-n}\sum_{k\geq a_n} \sum_{\substack{\gamma\in B(\textup{Strip}_k):\\ \ell(\gamma)=n}} x_c^n \\
&\leq e^{C\sqrt{n}}\sum_{k\geq a_n} \sum_{\substack{\gamma\in B(\textup{Strip}_k):\\ \ell(\gamma)=n}} x_c^n
\end{aligned}
\end{equation*}
for some $C>0$, where we used the standard inequality $c_n\geq \mu^n=x_c^{-n}$.
The desired result follows now from Lemma~\ref{lem: long bridges}.
\end{proof}

\bibliographystyle{alpha}
\bibliography{surveybis.bib}

\end{document}